\documentclass{birkau}

\usepackage{amsmath,amssymb}
\usepackage[hidelinks]{hyperref}
\usepackage{enumitem}
\usepackage{tikz}
\usepackage{tikz-cd}
\usepackage{xcolor}

\usetikzlibrary{
  arrows.meta,
  shapes,
  matrix,
  fit,
  positioning,
  cd,
  intersections,
  patterns,
  patterns.meta,
  backgrounds,
  calc
}

\numberwithin{equation}{section}

\theoremstyle{plain}
\newtheorem{theorem}{Theorem}[section]
\newtheorem{lemma}[theorem]{Lemma}
\newtheorem{proposition}[theorem]{Proposition}
\newtheorem{corollary}[theorem]{Corollary}

\theoremstyle{definition}
\newtheorem{definition}[theorem]{Definition}

\newtheorem{remark}[theorem]{Remark}

\DeclareMathOperator{\Aut}{Aut}
\DeclareMathOperator{\Gal}{Gal}
\DeclareMathOperator{\Sym}{Sym}
\DeclareMathOperator{\Fix}{Fix}
\DeclareMathOperator{\Stab}{Stab}

\makeatletter
\def\@firstoffive#1#2#3#4#5{#1}
\def\@newref#1{%
  \expandafter\@setref\csname r@#1\endcsname\@firstoffive{#1}%
}
\def\ref#1{{\rm\@newref{#1}}}
\makeatother

\begin{document}

\title[Finite-Orbit Actions and Exact Reconstruction]
{Finite-Orbit Actions\\ and Exact Reconstruction}

\author[N. Marmaridis]{Nikolaos Marmaridis}

\address{Mathematics Department\\University of Ioannina\\451 10 Ioannina\\Greece}
\urladdr{https://sites.google.com/site/maths4edu/}
\email{nmarmar@uoi.gr}

\subjclass{20E18, 20E22, 12F10, 12F05, 54H11}

\keywords{Profinite groups, Group actions, Finite-level exactness property,
Galois correspondence, Infinite Galois theory}

\begin{abstract}
We associate a profinite group to every group \(G\) acting on a set
\(\Omega\) with finite orbits. For each finite \(G\)-stable subset
\(A\subseteq\Omega\), let \(G_A\leq\Sym(A)\) be the induced finite
permutation group. The groups \(G_A\), with the natural restriction maps,
form an inverse system, and we define
$\Gamma_\Omega:=\varprojlim_A G_A$.

We show that \(\Gamma_\Omega\) acts naturally on \(\Omega\) and is
canonically topologically isomorphic to the closure of the image of \(G\) in
\(\Sym(\Omega)\), endowed with the topology of pointwise convergence.

We introduce the finite-level exactness property \(\textup{FLEP}\), under
which subgroups of \(\Gamma_\Omega\) are recovered up to closure from their
fixed-point sets, and closed subgroups are recovered exactly. We prove several
equivalent formulations of \(\textup{FLEP}\). Under this condition, the
fixed-point set construction gives an inclusion-reversing bijection between
closed subgroups of \(\Gamma_\Omega\) and the fixed subsets of \(\Omega\)
arising from closed subgroups.

We apply the theory in two directions. First, every profinite group \(\Gamma\)
is recovered from its normal finite-quotient action on
$\coprod_{N}\Gamma/N$, 
where \(N\) ranges over the open normal subgroups of \(\Gamma\). For this
action, \(\textup{FLEP}\) holds precisely when every finite quotient
\(\Gamma/N\), with \(N\) open and normal, is a Dedekind group. 

Second, if
\(G\leq\Aut(E)\) acts on a field \(E\) with finite orbits and \(F=E^G\), then
\(E/F\) is Galois and the construction yields a canonical topological
isomorphism
$\Gamma_E\cong_{\mathrm{top}}\Gal(E/F)$, 
where \(\Gal(E/F)\) has the Krull topology. Thus the Krull Galois group is
recovered from finite-orbit data.
\end{abstract}
\maketitle

\section{Introduction}

Let \(G\) be a group acting on a set \(\Omega\), and suppose that every orbit
\(G\omega\), with \(\omega\in\Omega\), is finite. For each finite
\(G\)-stable subset \(A\subseteq\Omega\), the restricted action gives a finite
permutation group \(G_A\leq\Sym(A)\). As \(A\) varies, these groups form an
inverse system, and we associate with the action the profinite group
\[
\Gamma_\Omega:=
\varprojlim_{A\in\mathcal F_G(\Omega)}G_A.
\]
The aim of this paper is to study this construction as a finite-level
reconstruction procedure and to determine when it admits a Galois-type
fixed-point correspondence.

Our first result identifies the inverse-limit group \(\Gamma_\Omega\) with the
closure of the permutation image of \(G\). If
\(\rho:G\to\Sym(\Omega)\) is the permutation representation and
\(\Sym(\Omega)\) is endowed with the topology of pointwise convergence, then
\[
\Gamma_\Omega\cong_{\mathrm{top}}\overline{\rho(G)}.
\]
Thus the inverse limit of the finite permutation groups \(G_A\) recovers
precisely the closure of the original permutation representation. This places the construction in the general circle of completion procedures
arising from permutation representations and homomorphisms with dense image
into totally disconnected locally compact groups; see \cite{ReidWesolek}. In
the present paper, the finite-orbit hypothesis leads specifically to profinite
completions.

The second part of the paper concerns exactness. The group
\(\Gamma_\Omega\) acts naturally on \(\Omega\), and one may ask when subgroups
of \(\Gamma_\Omega\) are recovered from their fixed-point sets. This leads to the
finite-level exactness property, abbreviated \(\textup{FLEP}\). In its open
subgroup form, \(\textup{FLEP}\) requires
\[
U=\Stab_{\Gamma_\Omega}(\Fix_\Omega(U))
\]
for every open subgroup \(U\leq\Gamma_\Omega\). We prove several equivalent
formulations, including the closed-subgroup form
\[
J=\Stab_{\Gamma_\Omega}(\Fix_\Omega(J)).
\]
Under \(\textup{FLEP}\), the assignments
\(J\mapsto\Fix_\Omega(J)\) and
\(X\mapsto\Stab_{\Gamma_\Omega}(X)\) give an inclusion-reversing bijection
between closed subgroups of \(\Gamma_\Omega\) and the fixed-point subsets arising
from them.

The theory is applied first to profinite groups. If \(\Gamma\) is profinite
and
\[
\Omega_\Gamma:=\coprod_{N\in\mathcal N_o(\Gamma)}\Gamma/N,
\]
where \(\mathcal N_o(\Gamma)\) denotes the set of open normal subgroups of
\(\Gamma\), then the left-translation action of \(\Gamma\) on
\(\Omega_\Gamma\) has finite orbits and reconstructs \(\Gamma\). For this
normal finite-quotient action, \(\textup{FLEP}\) holds if and only if every
open subgroup of \(\Gamma\) is normal, equivalently if and only if each finite
quotient \(\Gamma/N\), with \(N\in\mathcal N_o(\Gamma)\), is a Dedekind
group. Thus the normal finite-quotient action shows that reconstruction and
exactness are distinct phenomena: the action always reconstructs \(\Gamma\),
but it has an exact fixed-point set correspondence only under a strong
finite-quotient condition.

The final application is to infinite Galois theory. Let \(E\) be a field, let
\(G\leq\Aut(E)\), and put \(F=E^G\). 
The finite orbit property for the action
of \(G\) on \(E\) is equivalent to \(E/F\) being Galois. For each finite
\(G\)-stable subset \(A\subseteq E\), the field \(L_A:=F(A)\) is finite
Galois over \(F\), and \(G_A\) identifies canonically with
\(\Gal(L_A/F)\). Passing to inverse limits gives a canonical topological
isomorphism
\[
\Gamma_E\cong_{\mathrm{top}}\Gal(E/F),
\]
where the right-hand side has the Krull topology. Thus the Krull Galois
group, its topology, and the classical closed-subgroup Galois correspondence
are recovered from finite orbit data. In this case, finite Galois theory at
the levels \(L_A/F\) supplies the finite-level separation required for
\(\textup{FLEP}\).

A closely related finite-orbit condition appears in infinite Galois theory for
commutative rings. Cruthirds calls a group \(G\) of automorphisms of a
commutative ring \(S\) locally finite when every orbit
\(\{\sigma(s):\sigma\in G\}\) is finite, and studies fixed-ring
correspondences for such groups; see \cite{Cruthirds}.

The arguments use standard facts on inverse limits of finite groups and
profinite groups, as presented for example in
\cite{RibesZalesskii, Wilson}, the topology of pointwise convergence
on permutation groups, Artin's theorem on fixed fields of finite groups of
automorphisms, and the classical Krull theory of infinite Galois extensions;
for the latter, see for instance
\cite{ConradInfiniteGalois, LangAlgebra}.

While permutation completions are classical, the present paper develops a
systematic finite-orbit reconstruction framework, separates reconstruction
from exactness through the property \(\textup{FLEP}\), and uses this viewpoint
to give a unified treatment of profinite reconstruction, fixed-point set
correspondences, and infinite Galois theory.

The paper is organized as follows. Section~\ref{sec:fop} introduces finite-orbit actions
and their finite permutation levels. Section~\ref{sec:gamma} defines \(\Gamma_\Omega\) and
records its basic topological properties. Section~\ref{sec:gamma-action}  constructs the induced
action of \(\Gamma_\Omega\) on \(\Omega\). Section~\ref{sec:closure} identifies
\(\Gamma_\Omega\) with the permutation closure of \(G\). Section~\ref{sec:flep} introduces
\(\textup{FLEP}\) and proves the fixed-set correspondence. Section~\ref{sec:normal-finite-quotient-action} applies
the theory to profinite groups. Section~\ref{sec:galois} applies the construction to infinite
Galois theory.

\section{Finite-orbit actions and finite levels}
\label{sec:fop}

Let \(G\) be a group acting on a set \(\Omega\). We write the action on the
left, so that \(g\omega\) denotes the image of \(\omega\in\Omega\) under
\(g\in G\). Let \(\rho:G\to\Sym(\Omega)\), \(\rho(g)(\omega)=g\omega\), be
the associated permutation representation.

\begin{definition}
The action of \(G\) on \(\Omega\) is said to have the \emph{finite orbit
property}, abbreviated \textup{FOP}, if the orbit
\(G\omega:=\{g\omega:g\in G\}\) is finite for every \(\omega\in\Omega\).
\end{definition}

A subset \(A\subseteq\Omega\) is called \(G\)-stable if \(gA=A\) for every
\(g\in G\). Equivalently, \(ga\in A\) whenever \(a\in A\) and \(g\in G\).
Let \(\mathcal F_G(\Omega)\) denote the set of all finite \(G\)-stable subsets
of \(\Omega\). This set is directed by inclusion, since \(A\cup B\) is again
finite and \(G\)-stable whenever \(A,B\in\mathcal F_G(\Omega)\). If the action
has \textup{FOP}, then every element of \(\Omega\) lies in some member of
\(\mathcal F_G(\Omega)\), namely in its own orbit.

For \(A\in\mathcal F_G(\Omega)\), the restricted action induces a homomorphism
\[
\rho_A:G\to\Sym(A),
\qquad
\rho_A(g)(a)=ga.
\] We write
\(G_A:=\rho_A(G)\leq\Sym(A)\).

Suppose that \(A\subseteq B\) are finite \(G\)-stable subsets. Restriction of
permutations from \(B\) to \(A\) gives a map \(r_{BA}:G_B\to G_A\), defined by
\(r_{BA}(\rho_B(g))=\rho_A(g)\).

\begin{lemma}
The map \(r_{BA}\) is a well-defined surjective homomorphism.
\end{lemma}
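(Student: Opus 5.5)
The plan is to view \(r_{BA}\) as the restriction of the genuine restriction map on permutations, so that well-definedness is automatic. Since \(A\subseteq B\) are both \(G\)-stable, any \(\sigma=\rho_B(g)\in G_B\) maps \(A\) into \(A\): for \(a\in A\) we have \(\sigma(a)=ga\in A\). As \(\sigma\) is injective and \(A\) is finite, \(\sigma|_A\) is a bijection of \(A\); we could also note directly that \(\sigma|_A=\rho_A(g)\in\Sym(A)\). Thus the assignment \(\sigma\mapsto\sigma|_A\) is a map \(G_B\to\Sym(A)\) defined purely in terms of \(\sigma\), with no reference to a choice of \(g\).

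The first step is to check that this agrees with the formula defining \(r_{BA}\). If \(\rho_B(g)=\rho_B(h)\), then \(ga=ha\) for all \(a\in B\), in particular for all \(a\in A\), so \(\rho_A(g)=\rho_A(h)\). Hence \(r_{BA}(\rho_B(g))=\rho_A(g)=\rho_B(g)|_A\) does not depend on the representative \(g\), and its image lies in \(\rho_A(G)=G_A\).

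For the homomorphism property, we compute \(r_{BA}(\rho_B(g)\rho_B(h))=r_{BA}(\rho_B(gh))=\rho_A(gh)=\rho_A(g)\rho_A(h)\), using that \(\rho_B\) and \(\rho_A\) are homomorphisms. This can be summarized by the identity \(r_{BA}\circ\rho_B=\rho_A\) with \(\rho_B\) surjective onto \(G_B\). Surjectivity is immediate from the same identity: any element of \(G_A\) has the form \(\rho_A(g)=r_{BA}(\rho_B(g))\).

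There is no real obstacle here. The only point requiring care is the well-definedness step, namely that the kernel of \(\rho_B\) is contained in the kernel of \(\rho_A\), which follows from \(A\subseteq B\).
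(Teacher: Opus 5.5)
Your proposal is correct and follows the paper's proof essentially step for step. You derive well-definedness from \(\ker\rho_B\leq\ker\rho_A\), the homomorphism property from \(r_{BA}(\rho_B(g)\rho_B(h))=\rho_A(gh)\), and surjectivity from \(r_{BA}\circ\rho_B=\rho_A\). Your extra remark that \(r_{BA}\) is literally \(\sigma\mapsto\sigma|_A\) is a harmless clarification.
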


\begin{proof}
If \(\rho_B(g)=\rho_B(h)\), then \(g\) and \(h\) induce the same permutation
on \(B\), hence also on \(A\). Thus \(\rho_A(g)=\rho_A(h)\), so \(r_{BA}\) is
well-defined. It is a homomorphism because
\(r_{BA}(\rho_B(g)\rho_B(h))=r_{BA}(\rho_B(gh))=\rho_A(gh)=\rho_A(g)\rho_A(h)\).
It is surjective because every element of \(G_A\) has the form \(\rho_A(g)\)
for some \(g\in G\), and \(r_{BA}(\rho_B(g))=\rho_A(g)\).
\end{proof}

\begin{lemma}
\label{lem:inversesystem}
The family \((G_A,r_{BA})_{A\subseteq B}\) is an inverse system of finite
groups over the directed set \(\mathcal F_G(\Omega)\).
\end{lemma}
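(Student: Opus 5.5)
We verify the axioms of an inverse system directly. The index set \(\mathcal F_G(\Omega)\) is directed by inclusion, as already observed: for \(A,B\in\mathcal F_G(\Omega)\) the union \(A\cup B\) is finite and \(G\)-stable, so it is an upper bound for both. Each \(G_A\) is a subgroup of the finite group \(\Sym(A)\), hence finite. For every pair \(A\subseteq B\) in \(\mathcal F_G(\Omega)\), the previous lemma shows that \(r_{BA}:G_B\to G_A\) is a well-defined homomorphism, and it is even surjective.

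It remains to check the two compatibility conditions. First, \(r_{AA}=\mathrm{id}_{G_A}\), since \(r_{AA}(\rho_A(g))=\rho_A(g)\) for every \(g\in G\) and every element of \(G_A\) has this form. Second, for \(A\subseteq B\subseteq C\) in \(\mathcal F_G(\Omega)\) we need \(r_{BA}\circ r_{CB}=r_{CA}\). Let \(x\in G_C\) and write \(x=\rho_C(g)\) for some \(g\in G\). Then
\[
r_{BA}(r_{CB}(\rho_C(g)))=r_{BA}(\rho_B(g))=\rho_A(g)=r_{CA}(\rho_C(g)).
\]
Alternatively, each map is literally restriction of a permutation to a subset, and restricting from \(C\) to \(B\) and then to \(A\) is the same as restricting from \(C\) to \(A\).

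There is no real obstacle here. The only point needing care is that every element of \(G_C\) is of the form \(\rho_C(g)\), so that the formula for \(r_{CB}\) applies to it. This holds by the definition \(G_C=\rho_C(G)\), and the previous lemma guarantees that the value does not depend on the choice of \(g\).
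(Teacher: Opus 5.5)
Your proof is correct and follows the paper's argument: \(r_{AA}\) is the identity, and \(r_{CA}=r_{BA}\circ r_{CB}\) because restricting from \(C\) to \(A\) factors through \(B\). Your explicit check via representatives \(\rho_C(g)\), and your remarks on finiteness of \(G_A\) and directedness of \(\mathcal F_G(\Omega)\), spell out details the paper leaves implicit.
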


\begin{proof}
For every \(A\), the map \(r_{AA}\) is the identity on \(G_A\). If
\(A\subseteq B\subseteq C\), then restriction from \(C\) to \(A\) is the same
as restriction from \(C\) to \(B\), followed by restriction from \(B\) to \(A\).
Thus \(r_{CA}=r_{BA}\circ r_{CB}\), and the assertion follows.
\end{proof}

\section{The profinite group associated to a finite-orbit action}
\label{sec:gamma}

\begin{definition}
Let \(G\) act on \(\Omega\) with \textup{FOP}. We define
\[
        \Gamma_\Omega:=\varprojlim_{A\in\mathcal F_G(\Omega)}G_A,
\]
the inverse limit of the finite permutation groups \(G_A\) with respect to the
restriction maps \(r_{BA}:G_B\to G_A\), for \(A\subseteq B\). We endow
\(\Gamma_\Omega\) with the inverse-limit topology.
\end{definition}

Thus an element of \(\Gamma_\Omega\) is a compatible family
\(\gamma=(\gamma_A)_A\), with \(\gamma_A\in G_A\), such that
\(r_{BA}(\gamma_B)=\gamma_A\) whenever \(A\subseteq B\).

\begin{proposition}
The group \(\Gamma_\Omega\) is profinite.
\end{proposition}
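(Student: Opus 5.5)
The plan is to show directly that \(\Gamma_\Omega\) is a compact, Hausdorff, totally disconnected topological group, by realizing it as a closed subgroup of a product of finite discrete groups. By Lemma~\ref{lem:inversesystem}, \((G_A,r_{BA})\) is an inverse system of finite groups over the directed set \(\mathcal F_G(\Omega)\). Give each \(G_A\) the discrete topology. Then the inverse-limit topology on \(\Gamma_\Omega\) is the subspace topology coming from the product
\[
P:=\prod_{A\in\mathcal F_G(\Omega)}G_A ,
\]
and \(\Gamma_\Omega\) is exactly the set of compatible families in \(P\).

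I would carry out the steps in the following order.

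First, \(P\) is a topological group: coordinatewise multiplication and inversion are continuous, since each factor is a discrete group. By Tychonoff's theorem, \(P\) is compact, and as a product of finite discrete spaces it is Hausdorff and totally disconnected.

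Second, \(\Gamma_\Omega\) is a subgroup of \(P\). This holds because each \(r_{BA}\) is a homomorphism, by the lemma preceding Lemma~\ref{lem:inversesystem}. Hence compatibility is preserved under products and inverses, and the identity family is compatible.

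Third, \(\Gamma_\Omega\) is closed in \(P\). For each pair \(A\subseteq B\), set
\[
C_{A,B}=\{x\in P : r_{BA}(x_B)=x_A\}.
\]
Each \(C_{A,B}\) is closed: it is the preimage of the diagonal of the discrete (hence Hausdorff) space \(G_A\times G_A\) under the continuous map \(x\mapsto (r_{BA}(x_B),x_A)\). Since \(\Gamma_\Omega=\bigcap_{A\subseteq B}C_{A,B}\), it is closed. A closed subgroup of a compact Hausdorff totally disconnected group is again compact, Hausdorff and totally disconnected, so \(\Gamma_\Omega\) is profinite.

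I would also point out nonemptiness, which is trivial here because the identity lies in \(\Gamma_\Omega\). One could equally cite the standard fact from \cite{RibesZalesskii} that an inverse limit of finite discrete groups is profinite. Either way, the only point requiring any care is the closedness step, which is routine.
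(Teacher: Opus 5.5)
Your proof is correct and takes the same approach as the paper. The paper simply invokes the standard fact that an inverse limit of finite discrete groups is profinite, and you have written out the usual proof of that fact: \(\Gamma_\Omega\) is a closed subgroup of the compact, Hausdorff, totally disconnected product \(\prod_A G_A\).
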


\begin{proof}
Any inverse limit of finite discrete groups is a profinite group.
\end{proof}

\subsection*{Open subgroups and closures}
\label{subsec:open-subgroups-closures}

We record the standard descriptions of open subgroups and closures in the
present notation. For \(A\in\mathcal F_G(\Omega)\), let
\(\pi_A:\Gamma_\Omega\to G_A\) be the canonical projection,
\(\pi_A((\gamma_B)_B)=\gamma_A\). The subgroups \(\ker(\pi_A)\), with
\(A\in\mathcal F_G(\Omega)\), form a basis of open normal neighbourhoods of
the identity in \(\Gamma_\Omega\).

\begin{lemma}[Open subgroups]
\label{lem:open-subgroups-general}
A subgroup \(U\leq\Gamma_\Omega\) is open if and only if there exist
\(A\in\mathcal F_G(\Omega)\) and a subgroup \(Q\leq G_A\) such that
\(U=\pi_A^{-1}(Q)\). Equivalently, \(U\) is open if and only if
\(\ker(\pi_A)\leq U\) for some \(A\in\mathcal F_G(\Omega)\).
\end{lemma}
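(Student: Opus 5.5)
We will prove the three conditions equivalent: (i) \(U\) is open; (ii) \(\ker(\pi_A)\leq U\) for some \(A\in\mathcal F_G(\Omega)\); (iii) \(U=\pi_A^{-1}(Q)\) for some \(A\) and some \(Q\leq G_A\). We go around the cycle (i)\(\Rightarrow\)(ii)\(\Rightarrow\)(iii)\(\Rightarrow\)(i).

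For (i)\(\Rightarrow\)(ii), let \(U\) be open. Then \(U\) is a neighbourhood of the identity. We have recorded that the kernels \(\ker(\pi_A)\), \(A\in\mathcal F_G(\Omega)\), form a basis of neighbourhoods of the identity. Hence some \(\ker(\pi_A)\) is contained in \(U\). This basis property is the only genuinely topological input. If one wants to justify it, recall that basic open sets of the inverse limit are finite intersections of sets \(\pi_{A_i}^{-1}(\{\gamma_{A_i}\})\). Using directedness, one replaces \(A_1,\dots,A_n\) by \(A=A_1\cup\dots\cup A_n\in\mathcal F_G(\Omega)\), and the compatibility relations \(\pi_{A_i}=r_{AA_i}\circ\pi_A\) give \(\ker(\pi_A)\subseteq\bigcap_i\ker(\pi_{A_i})\).

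For (ii)\(\Rightarrow\)(iii), suppose \(\ker(\pi_A)\leq U\) and put \(Q:=\pi_A(U)\leq G_A\). The inclusion \(U\subseteq\pi_A^{-1}(Q)\) is clear. Conversely, let \(\gamma\in\pi_A^{-1}(Q)\). Then \(\pi_A(\gamma)=\pi_A(u)\) for some \(u\in U\). It follows that \(u^{-1}\gamma\in\ker(\pi_A)\leq U\), and therefore \(\gamma\in U\).

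For (iii)\(\Rightarrow\)(i), note that \(G_A\) is finite and discrete, so every subgroup \(Q\) is open in \(G_A\). The projection \(\pi_A\) is continuous by the definition of the inverse-limit topology. Hence \(\pi_A^{-1}(Q)\) is open. No step is a real obstacle; the only point needing care is the neighbourhood-basis claim in the first step, which rests on the directedness of \(\mathcal F_G(\Omega)\).
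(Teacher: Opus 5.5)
Your proof is correct and follows essentially the same route as the paper: the kernels \(\ker(\pi_A)\) form a neighbourhood basis, which gives the first implication; \(U=\pi_A^{-1}(\pi_A(U))\) whenever \(\ker(\pi_A)\leq U\) gives the second; and the converse is immediate. The only cosmetic differences are that you deduce openness of \(\pi_A^{-1}(Q)\) from continuity of \(\pi_A\) and discreteness of \(G_A\), whereas the paper notes that it contains the open subgroup \(\ker(\pi_A)\), and that you justify the neighbourhood-basis claim via directedness of \(\mathcal F_G(\Omega)\), which the paper takes as standard.
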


\begin{proof}
Since the kernels \(\ker(\pi_A)\) form a neighbourhood basis at the identity,
a subgroup \(U\leq\Gamma_\Omega\) is open precisely when it contains some
\(\ker(\pi_A)\). In that case \(U=\pi_A^{-1}(\pi_A(U))\), with
\(\pi_A(U)\leq G_A\). Conversely, every subgroup of the form
\(\pi_A^{-1}(Q)\), with \(Q\leq G_A\), contains \(\ker(\pi_A)\), and is
therefore open.
\end{proof}

\begin{lemma}[Closure formula]
\label{lem:closure-general}
For every subgroup \(H\leq\Gamma_\Omega\), its closure is given by
\[
        \overline H
        =
        \bigcap_{A\in\mathcal F_G(\Omega)}H\ker(\pi_A).
\]
\end{lemma}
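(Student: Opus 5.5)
We prove the two inclusions separately, using only that the kernels \(\ker(\pi_A)\), \(A\in\mathcal F_G(\Omega)\), form a basis of open normal neighbourhoods of the identity in \(\Gamma_\Omega\).

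For \(\overline H\subseteq\bigcap_A H\ker(\pi_A)\), fix \(A\). Since \(\ker(\pi_A)\) is normal, \(H\ker(\pi_A)\) is a subgroup containing \(\ker(\pi_A)\), so it is open by Lemma~\ref{lem:open-subgroups-general}. An open subgroup is also closed, because its complement is a union of cosets, each of which is open. Hence \(H\ker(\pi_A)\) is a closed set containing \(H\), and therefore it contains \(\overline H\). Intersecting over all \(A\) gives the inclusion.

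For the reverse inclusion, let \(\gamma\in\bigcap_A H\ker(\pi_A)\). We check that every basic neighbourhood of \(\gamma\) meets \(H\). Since the kernels form a neighbourhood basis at the identity and translation is a homeomorphism, the sets \(\gamma\ker(\pi_A)\) form a neighbourhood basis at \(\gamma\). By hypothesis, \(\gamma=hk\) with \(h\in H\) and \(k\in\ker(\pi_A)\). Then \(h=\gamma k^{-1}\in\gamma\ker(\pi_A)\), so \(\gamma\ker(\pi_A)\cap H\neq\emptyset\). Thus \(\gamma\in\overline H\).

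There is no real obstacle; the argument is routine. The only points that need care are the normality of \(\ker(\pi_A)\), which makes \(H\ker(\pi_A)\) a subgroup, and the fact that open subgroups are closed.
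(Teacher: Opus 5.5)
Your proof is correct and follows essentially the paper's route. The paper cites the general identity \(\overline H=\bigcap_V HV\), with \(V\) ranging over the identity neighbourhoods, and restricts it to the basis \(\ker(\pi_A)\); you verify both inclusions of that identity directly, using that the open subgroup \(H\ker(\pi_A)\) is closed for one direction and the neighbourhood basis \(\gamma\ker(\pi_A)\) at \(\gamma\) for the other.
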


\begin{proof}
In any topological group, \(\overline H=\bigcap_V HV\), where \(V\) ranges
over the neighbourhoods of the identity. Since the subgroups
\(\ker(\pi_A)\), with \(A\in\mathcal F_G(\Omega)\), form a neighbourhood
basis of the identity in \(\Gamma_\Omega\), this gives the formula.
\end{proof}
Thus, a subgroup \(H\leq\Gamma_\Omega\) is closed precisely when
\[
H=\bigcap_{A\in\mathcal F_G(\Omega)}H\ker(\pi_A).
\]
\begin{remark}
The closure formula also has the following coordinate interpretation:
\(\gamma\in\overline H\) if and only if, for every
\(A\in\mathcal F_G(\Omega)\), there exists an element
\(h^{(A)}\in H\), possibly depending on \(A\), such that
$\pi_A(\gamma)=\pi_A(h^{(A)})$.
\end{remark}

\section{The induced action of \(\Gamma_\Omega\) on
\(\Omega\)}
\label{sec:gamma-action}

The original action of \(G\) on \(\Omega\) extends naturally to an action of
\(\Gamma_\Omega\) on \(\Omega\).

Let \(\gamma=(\gamma_A)_A\in\Gamma_\Omega\) and let
\(\omega\in\Omega\). Choose \(A\in\mathcal F_G(\Omega)\) with 
\(\omega\in A \), and define
$\gamma\omega:=\gamma_A(\omega)$.

\begin{lemma}
The definition of \(\gamma\omega\) is independent of the choice of
\(A\in\mathcal F_G(\Omega)\) containing \(\omega\).
\end{lemma}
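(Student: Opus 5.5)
The plan is to reduce the comparison of two arbitrary choices to the comparison of each with a common larger choice, using that \(\mathcal F_G(\Omega)\) is directed and that the components of \(\gamma\) are compatible under restriction.

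First, fix \(\gamma=(\gamma_A)_A\in\Gamma_\Omega\) and \(\omega\in\Omega\). By \textup{FOP}, at least one \(A\in\mathcal F_G(\Omega)\) contains \(\omega\), for instance the orbit \(G\omega\), so the definition makes sense. Now suppose \(A,B\in\mathcal F_G(\Omega)\) both contain \(\omega\), and put \(C:=A\cup B\), which lies in \(\mathcal F_G(\Omega)\) by directedness. It then suffices to show \(\gamma_A(\omega)=\gamma_C(\omega)\) whenever \(A\subseteq C\) and \(\omega\in A\). The symmetric argument gives \(\gamma_B(\omega)=\gamma_C(\omega)\), and the two values agree.

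For the key step, compatibility of \(\gamma\) gives \(r_{CA}(\gamma_C)=\gamma_A\). Since \(r_{CA}\) is restriction of permutations from \(C\) to \(A\), this says \(\gamma_A=\gamma_C|_A\). We should check that this interpretation is legitimate. Write \(\gamma_C=\rho_C(g)\) for some \(g\in G\). Then \(r_{CA}(\gamma_C)=\rho_A(g)\), and \(\rho_A(g)(a)=ga=\rho_C(g)(a)\) for \(a\in A\). Hence \(\gamma_A(\omega)=\gamma_C(\omega)\).

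There is no real obstacle. The only point needing care is the one just checked: that \(r_{CA}\), although defined via representatives \(\rho_C(g)\), really is literal restriction of the permutation \(\gamma_C\) to \(A\). That \(A\) is \(G\)-stable guarantees \(\gamma_C|_A\) maps \(A\) into \(A\), so the restriction is a permutation of \(A\).
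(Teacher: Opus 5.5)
Your proposal is correct and takes essentially the same approach as the paper: pass to \(C=A\cup B\in\mathcal F_G(\Omega)\), use compatibility \(r_{CA}(\gamma_C)=\gamma_A\) and \(r_{CB}(\gamma_C)=\gamma_B\), and conclude \(\gamma_A(\omega)=\gamma_C(\omega)=\gamma_B(\omega)\). Your two additions, that \textup{FOP} ensures some \(A\) containing \(\omega\) exists and that \(r_{CA}\) is literal restriction of the permutation \(\gamma_C\), are harmless details the paper leaves implicit.
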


\begin{proof}
Let \(A,B\in\mathcal F_G(\Omega)\) with \(\omega\in A\cap B\), and set
\(C:=A\cup B\). Then \(C\in\mathcal F_G(\Omega)\). By compatibility,
\(r_{CA}(\gamma_C)=\gamma_A\) and \(r_{CB}(\gamma_C)=\gamma_B\). Hence the
actions of \(\gamma_A\) on \(A\) and of \(\gamma_B\) on \(B\) both agree with
the action of \(\gamma_C\) on \(C\). Since \(\omega\in A\cap B\), we get
\(\gamma_A(\omega)=\gamma_C(\omega)=\gamma_B(\omega)\).
\end{proof}

\begin{proposition}\label{prop:gamma-action}
The formula \((\gamma,\omega)\mapsto\gamma\omega\) defines a left action of
\(\Gamma_\Omega\) on \(\Omega\).
\end{proposition}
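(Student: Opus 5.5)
We need to check the two action axioms: that the identity of \(\Gamma_\Omega\) acts trivially, and that \((\gamma\delta)\omega=\gamma(\delta\omega)\) for all \(\gamma,\delta\in\Gamma_\Omega\) and \(\omega\in\Omega\). Well-definedness of \(\gamma\omega\) is provided by the preceding lemma, so for each \(\omega\) we may compute using any convenient \(A\in\mathcal F_G(\Omega)\) containing \(\omega\). Such an \(A\) exists by \textup{FOP}; for instance the orbit \(G\omega\) itself.

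For the identity, note that the identity element of \(\Gamma_\Omega\) is the family \((\mathrm{id}_A)_A\). Hence \(1\cdot\omega=\mathrm{id}_A(\omega)=\omega\).

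For compatibility, first recall that multiplication in \(\Gamma_\Omega\) is coordinatewise: \((\gamma\delta)_A=\gamma_A\delta_A\), composition in \(G_A\leq\Sym(A)\). Fix \(\omega\) and choose \(A\in\mathcal F_G(\Omega)\) with \(\omega\in A\). Then \(\delta\omega=\delta_A(\omega)\), and this point lies in \(A\) because \(\delta_A\) is a permutation of \(A\). This is the one point needing care: the same \(A\) is admissible for evaluating \(\gamma\) at \(\delta\omega\). Therefore
\[
\gamma(\delta\omega)=\gamma_A(\delta_A(\omega))=(\gamma_A\delta_A)(\omega)=(\gamma\delta)_A(\omega)=(\gamma\delta)\omega .
\]

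The only potential obstacle is ensuring that \(\delta\omega\) lies in a stable set on which \(\gamma\) can be evaluated coherently. It is resolved by the \(G\)-stability of \(A\), which makes \(\delta_A\in\Sym(A)\), together with the independence lemma, which lets us use a single \(A\) throughout. Everything else is routine unwinding of the definitions.
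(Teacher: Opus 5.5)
Your proof is correct and matches the paper's argument: you fix one \(A\in\mathcal F_G(\Omega)\) containing \(\omega\), note that \(\delta\omega=\delta_A(\omega)\in A\) so the same \(A\) evaluates \(\gamma\), and apply coordinatewise multiplication. The identity axiom is handled the same way; you are slightly more explicit than the paper about the identity being \((\mathrm{id}_A)_A\) and about \((\gamma\delta)_A=\gamma_A\delta_A\).
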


\begin{proof}
Let \(\gamma,\delta\in\Gamma_\Omega\) and let \(\omega\in\Omega\). Choose
\(A\in\mathcal F_G(\Omega)\) with \(\omega\in A\). Since
\(\delta_A\in G_A\leq\Sym(A)\), we have \(\delta\omega=\delta_A(\omega)\in A\).
Hence both \(\delta\omega\) and \(\gamma(\delta\omega)\) are computed using the
same finite \(G\)-stable set \(A\). It follows that
\(\gamma(\delta\omega)=\gamma_A(\delta_A(\omega))
=(\gamma_A\delta_A)(\omega)=(\gamma\delta)\omega\).
Similarly, the identity element acts trivially. Therefore the formula defines a
left action of \(\Gamma_\Omega\) on \(\Omega\).
\end{proof}

The original action is recovered through the canonical homomorphism
\[
\eta:G\to\Gamma_\Omega,
\qquad
\eta(g)=(\rho_A(g))_A .
\]
This homomorphism is well defined by compatibility of the restriction maps, and
\(\eta(g)\omega=g\omega\) for all \(g\in G\) and \(\omega\in\Omega\).

\begin{corollary}
\label{cor:orbits-coincide}
For every \(\omega\in\Omega\), the equality
\(\Gamma_\Omega\omega=G\omega\) holds. In particular, the induced action of
\(\Gamma_\Omega\) on \(\Omega\) has \textup{FOP}.
\end{corollary}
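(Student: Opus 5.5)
We prove the two inclusions separately, using the finite \(G\)-stable set \(A:=G\omega\). By \textup{FOP}, \(A\) is finite. It is \(G\)-stable by construction, so \(A\in\mathcal F_G(\Omega)\) and \(\omega\in A\).

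For \(G\omega\subseteq\Gamma_\Omega\omega\), we use the canonical homomorphism \(\eta\). For each \(g\in G\) we have \(g\omega=\eta(g)\omega\), so every element of \(G\omega\) lies in \(\Gamma_\Omega\omega\).

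For the reverse inclusion, let \(\gamma\in\Gamma_\Omega\). By the definition of the action and its independence of the chosen set, we may compute \(\gamma\omega\) with the set \(A=G\omega\), which gives \(\gamma\omega=\gamma_A(\omega)\). Since \(\gamma_A\in G_A=\rho_A(G)\), there is some \(g\in G\) with \(\gamma_A=\rho_A(g)\). Hence \(\gamma\omega=\rho_A(g)(\omega)=g\omega\in G\omega\). This is the key point, and the step needing the most care: \(G_A\) is defined as the image of \(G\), not as all of \(\Sym(A)\), so every coordinate of \(\gamma\) is realized by an actual element of \(G\).

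Combining the two inclusions gives \(\Gamma_\Omega\omega=G\omega\). Since each \(G\omega\) is finite by \textup{FOP}, every \(\Gamma_\Omega\)-orbit is finite. Therefore the induced action of \(\Gamma_\Omega\) on \(\Omega\) has \textup{FOP}. No real obstacle arises beyond the observation that the orbit itself is an admissible index in \(\mathcal F_G(\Omega)\).
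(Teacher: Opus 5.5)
Your proof is correct and follows essentially the same route as the paper: compute \(\gamma\omega\) on the finite \(G\)-stable set \(A=G\omega\), and write \(\gamma_A=\rho_A(g)\) for some \(g\in G\) to get \(\Gamma_\Omega\omega\subseteq G\omega\); then use \(\eta(g)\omega=g\omega\) for the reverse inclusion. Finiteness of \(G\omega\) then gives \textup{FOP} for the induced action of \(\Gamma_\Omega\).
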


\begin{proof}
Let \(\omega\in\Omega\) and set \(A:=G\omega\). By \textup{FOP},
\(A\in\mathcal F_G(\Omega)\). If
\(\gamma=(\gamma_B)_B\in\Gamma_\Omega\), then
\(\gamma\omega=\gamma_A(\omega)\). Since \(\gamma_A\in G_A=\rho_A(G)\), there
exists \(g\in G\) with \(\gamma_A=\rho_A(g)\), and therefore
$\gamma\omega=g\omega\in G\omega$.

Thus \(\Gamma_\Omega\omega\subseteq G\omega\). The reverse inclusion follows
from \(\eta(g)\omega=g\omega\) for \(g\in G\). Hence
\(\Gamma_\Omega\omega=G\omega\). Since \(G\omega\) is finite, the induced
action of \(\Gamma_\Omega\) has \textup{FOP}.
\end{proof}

\begin{lemma}
The homomorphism \(\eta:G\to\Gamma_\Omega\), \(g\mapsto(\rho_A(g))_A\), has
dense image.
\end{lemma}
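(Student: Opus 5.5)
To show that \(\eta(G)\) is dense in \(\Gamma_\Omega\), I will check that every nonempty basic open set of \(\Gamma_\Omega\) meets \(\eta(G)\). The kernels \(\ker(\pi_A)\), \(A\in\mathcal F_G(\Omega)\), form a basis of open neighbourhoods of the identity. So the cosets \(\gamma\ker(\pi_A)\), with \(\gamma\in\Gamma_\Omega\) and \(A\in\mathcal F_G(\Omega)\), form a basis of the topology. It therefore suffices to show the following: for every \(\gamma\in\Gamma_\Omega\) and every \(A\in\mathcal F_G(\Omega)\), there is \(g\in G\) with \(\eta(g)\in\gamma\ker(\pi_A)\). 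Equivalently, \(\pi_A(\eta(g))=\pi_A(\gamma)\).

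This is immediate from the construction. The coordinate \(\gamma_A=\pi_A(\gamma)\) lies in \(G_A=\rho_A(G)\), so there is \(g\in G\) with \(\rho_A(g)=\gamma_A\). By definition of \(\eta\), \(\pi_A(\eta(g))=\rho_A(g)=\gamma_A\). Hence \(\eta(g)\in\gamma\ker(\pi_A)\).

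Alternatively, and in the language of the closure formula (Lemma~\ref{lem:closure-general}), the same computation shows \(\Gamma_\Omega=\eta(G)\ker(\pi_A)\) for every \(A\). Taking \(H=\eta(G)\) in that lemma gives \(\overline{\eta(G)}=\bigcap_A\eta(G)\ker(\pi_A)=\Gamma_\Omega\). Note that the lemma is stated for subgroups \(H\leq\Gamma_\Omega\), and \(\eta(G)\) is one, since \(\eta\) is a homomorphism.

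There is essentially no obstacle. The only point to get right is that a single \(A\) suffices for each basic neighbourhood, so no compatibility across different levels is needed. This holds because the kernels are indexed by individual members of the directed set \(\mathcal F_G(\Omega)\), rather than by finite families of them. Directedness under unions has already absorbed that issue.
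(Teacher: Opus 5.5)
Your proposal is correct and follows essentially the same route as the paper: you take a basic neighbourhood $\gamma\ker(\pi_A)=\{\delta:\delta_A=\gamma_A\}$ and use $\gamma_A\in G_A=\rho_A(G)$ to produce $g$ with $\eta(g)$ in it. Your alternative via the closure formula, $\Gamma_\Omega=\eta(G)\ker(\pi_A)$ for every $A$, is the same observation rephrased.
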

\begin{proof}
Let \(\gamma=(\gamma_A)_A\in\Gamma_\Omega\), and let
\(U(A,\gamma_A)=\{\delta\in\Gamma_\Omega:\delta_A=\gamma_A\}\) be a basic
open neighbourhood of \(\gamma\). Since \(\gamma_A\in G_A=\rho_A(G)\), there
exists \(g\in G\) such that \(\rho_A(g)=\gamma_A\). Hence
\(\eta(g)\in U(A,\gamma_A)\). Therefore every basic open neighbourhood of
every element of \(\Gamma_\Omega\) meets \(\eta(G)\), and so \(\eta(G)\) is
dense.
\end{proof}

\section{Realization as a permutation closure}
\label{sec:closure}

We now identify \(\Gamma_\Omega\) with the closure of the original permutation
group inside \(\Sym(\Omega)\). We endow \(\Sym(\Omega)\) with the topology of
pointwise convergence, equivalently the function topology. A basis of
neighbourhoods of the identity consists of the pointwise stabilizers of finite
subsets of \(\Omega\).

Let \(\rho:G\to\Sym(\Omega)\) be the permutation representation introduced in
Section~\ref{sec:fop}, and let \(\overline{\rho(G)}\) denote its closure in
\(\Sym(\Omega)\). Explicitly, \(\sigma\in\Sym(\Omega)\) belongs to
\(\overline{\rho(G)}\) if and only if, for every finite subset
\(Y\subseteq\Omega\), there exists \(g\in G\) such that
\(\sigma|_Y=\rho(g)|_Y\).

\begin{theorem}[Realization as a permutation closure]
\label{thm:permutation-closure}
Let \(G\) act on \(\Omega\) with \textup{FOP}. Then there is a natural
topological isomorphism
\[
        \Gamma_\Omega
        \cong_{\mathrm{top}}
        \overline{\rho(G)}
        \subseteq \Sym(\Omega).
\]
\end{theorem}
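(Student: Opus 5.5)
The plan is to use the action of \(\Gamma_\Omega\) on \(\Omega\) from Proposition~\ref{prop:gamma-action} to define
\[
\Phi:\Gamma_\Omega\to\Sym(\Omega),\qquad \Phi(\gamma)(\omega):=\gamma\omega,
\]
and to show that \(\Phi\) is a topological isomorphism onto \(\overline{\rho(G)}\). Naturality will come from \(\Phi\circ\eta=\rho\), which holds because \(\eta(g)\omega=g\omega\).

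First, \(\Phi(\gamma)\) is indeed a bijection of \(\Omega\), since \(\Phi(\gamma^{-1})\) is its inverse, and \(\Phi\) is a homomorphism by Proposition~\ref{prop:gamma-action}. For injectivity, suppose \(\Phi(\gamma)=\mathrm{id}\). Then \(\gamma_A(a)=a\) for every \(A\) and every \(a\in A\), so each \(\gamma_A=1\) and hence \(\gamma=1\).

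For continuity, it suffices to work at the identity. Let \(Y\subseteq\Omega\) be finite and let \(A:=GY=\bigcup_{y\in Y}Gy\), which lies in \(\mathcal F_G(\Omega)\) by FOP. If \(\gamma\in\ker(\pi_A)\), then \(\Phi(\gamma)\) fixes \(A\supseteq Y\) pointwise. Thus \(\Phi^{-1}\) of the pointwise stabilizer of \(Y\) contains the open subgroup \(\ker(\pi_A)\), so it is open.

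Next I identify the image. By continuity, \(\Phi(\Gamma_\Omega)\subseteq\overline{\Phi(\eta(G))}=\overline{\rho(G)}\), using the density of \(\eta(G)\). Since \(\Gamma_\Omega\) is compact and \(\Sym(\Omega)\) is Hausdorff, the image is compact, hence closed, and it contains \(\rho(G)\). Therefore \(\Phi(\Gamma_\Omega)=\overline{\rho(G)}\).

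A continuous bijection from a compact space onto a Hausdorff space is a homeomorphism, so \(\Phi\) is a topological isomorphism onto \(\overline{\rho(G)}\).

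The main point needing care is the compactness argument, namely that \(\overline{\rho(G)}\) is exactly the image and not larger. As a concrete backup, I would check surjectivity directly. Given \(\sigma\in\overline{\rho(G)}\) and \(A\in\mathcal F_G(\Omega)\), pick \(g\) with \(\sigma|_A=\rho(g)|_A\). Then \(\sigma|_A=\rho_A(g)\in G_A\). These restrictions are compatible as \(A\) varies, so \(\gamma:=(\sigma|_A)_A\) lies in \(\Gamma_\Omega\), and \(\Phi(\gamma)=\sigma\) because every \(\omega\) lies in some \(A\).

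Openness of \(\Phi\) onto its image can also be checked by hand. For each \(A\), \(\Phi(\ker\pi_A)\) equals the stabilizer of \(A\) inside \(\overline{\rho(G)}\), and \(A\) is finite.
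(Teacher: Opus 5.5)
Your proposal is correct and follows the paper's proof essentially step for step: the same map \(\gamma\mapsto(\omega\mapsto\gamma\omega)\), injectivity read off from the coordinates \(\gamma_A\), continuity at the identity via \(\ker(\pi_A)\), closedness of the compact image, and the compact--Hausdorff criterion. The only minor difference is that you obtain \(\Phi(\Gamma_\Omega)\subseteq\overline{\rho(G)}\) from continuity together with the density of \(\eta(G)\), whereas the paper checks this inclusion directly on coordinates; that direct check is the same argument as the density lemma, written out.
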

\begin{proof}
By Proposition~\ref{prop:gamma-action}, \(\Gamma_\Omega\) acts on \(\Omega\).
Let
\[
\Theta:\Gamma_\Omega\to\Sym(\Omega),
\qquad
\Theta(\gamma)(\omega)=\gamma\omega,
\]
be the associated permutation representation. Thus \(\Theta\) is a group
homomorphism.

We first show that \(\Theta\) is injective. Suppose
\(\Theta(\gamma)=\Theta(\delta)\). If \(A\in\mathcal F_G(\Omega)\), then for
every \(\omega\in A\),
\[
\gamma_A(\omega)
=
\Theta(\gamma)(\omega)
=
\Theta(\delta)(\omega)
=
\delta_A(\omega).
\]
Hence \(\gamma_A=\delta_A\) as permutations of \(A\). Since this holds for all
\(A\), we have \(\gamma=\delta\).

Next we show that \(\Theta\) is continuous. It is enough to check continuity
at the identity. Let \(Y\subseteq\Omega\) be finite, and let
\(W_Y\leq\Sym(\Omega)\) be the pointwise stabilizer of \(Y\). Choose
\(A\in\mathcal F_G(\Omega)\) with \(Y\subseteq A\). If
\(\gamma\in\ker(\pi_A)\), then \(\gamma_A=\mathrm{id}_A\), so
\(\Theta(\gamma)\) fixes every element of \(Y\). Thus
$
\Theta(\ker(\pi_A))\subseteq W_Y$.

Hence \(\Theta\) is continuous at the identity, and therefore continuous.

It remains to identify the image of \(\Theta\). We prove that
\[
\Theta(\Gamma_\Omega)=\overline{\rho(G)}.
\]
Since \(\Theta(\eta(g))=\rho(g)\) for every \(g\in G\), the permutation
image of \(G\) is contained in \(\Theta(\Gamma_\Omega)\). The group
\(\Gamma_\Omega\) is compact and \(\Theta\) is continuous, so
\(\Theta(\Gamma_\Omega)\) is compact. Since \(\Sym(\Omega)\) is Hausdorff,
\(\Theta(\Gamma_\Omega)\) is closed. Therefore
$\overline{\rho(G)}\subseteq \Theta(\Gamma_\Omega)$.

Conversely, let \(\gamma\in\Gamma_\Omega\). Let \(Y\subseteq\Omega\) be finite,
and choose \(A\in\mathcal F_G(\Omega)\) with \(Y\subseteq A\). Since
\(\gamma_A\in G_A=\rho_A(G)\), there exists \(g\in G\) such that
\(\gamma_A=\rho_A(g)\). Hence
$\Theta(\gamma)|_Y=\rho(g)|_Y$.
 Thus \(\Theta(\gamma)\in\overline{\rho(G)}\), and so
$\Theta(\Gamma_\Omega)\subseteq\overline{\rho(G)}$.

Therefore 
$\Theta(\Gamma_\Omega)=\overline{\rho(G)}$.

Thus \(\Theta\) is a continuous bijection from the compact group
\(\Gamma_\Omega\) onto the Hausdorff group \(\overline{\rho(G)}\). By the
compact-Hausdorff criterion, \(\Theta\) is a homeomorphism. Hence it is a
topological group isomorphism.
\end{proof}
In particular, after identifying \(\Gamma_\Omega\) with
\(\overline{\rho(G)}\leq\Sym(\Omega)\) by means of \(\Theta\), the
inverse-limit topology on \(\Gamma_\Omega\) is precisely the subspace topology
inherited from the topology of pointwise convergence on \(\Sym(\Omega)\).
\medskip 

By Corollary~\ref{cor:orbits-coincide}, the induced action of
\(\Gamma_\Omega\) on \(\Omega\) again has \textup{FOP}. Hence the finite-orbit
construction can be applied once more to this action.

\begin{corollary}[Idempotence of the construction]
\label{cor:idempotence}
The profinite group associated with the induced action of \(\Gamma_\Omega\) on
\(\Omega\) is naturally topologically isomorphic to~\(\Gamma_\Omega\).
\end{corollary}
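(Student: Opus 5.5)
The plan is to apply the construction to the induced action of \(\Gamma_\Omega\) on \(\Omega\) and to compare finite levels. Write \(\Gamma'\) for the resulting profinite group. The key steps are as follows.

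First, the two indexing sets coincide: \(\mathcal F_{\Gamma_\Omega}(\Omega)=\mathcal F_G(\Omega)\). A subset \(A\) is \(\Gamma_\Omega\)-stable exactly when it is a union of \(\Gamma_\Omega\)-orbits. By Corollary~\ref{cor:orbits-coincide}, these orbits are exactly the \(G\)-orbits, so \(\Gamma_\Omega\)-stability and \(G\)-stability agree.

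Second, the finite levels coincide. For \(A\in\mathcal F_G(\Omega)\), write \((\Gamma_\Omega)_A\leq\Sym(A)\) for the image of \(\Gamma_\Omega\) under restriction to \(A\). Since \(\gamma\) acts on \(A\) by \(\gamma_A\in G_A\), we get \((\Gamma_\Omega)_A\subseteq G_A\). Conversely, \(\eta(g)\) acts as \(\rho_A(g)\), which gives the reverse inclusion. Hence \((\Gamma_\Omega)_A=G_A\) as subgroups of \(\Sym(A)\). In both systems the transition maps are restriction of permutations from \(B\) to \(A\), so the two inverse systems are literally identical. Therefore \(\Gamma'=\varprojlim_A G_A=\Gamma_\Omega\), and both carry the same inverse-limit topology.

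Third, I would make naturality explicit. The canonical map \(\eta':\Gamma_\Omega\to\Gamma'\), \(\gamma\mapsto(\gamma|_A)_A=(\gamma_A)_A\), is the identity under this identification. In particular, it is a topological isomorphism, and it is compatible with the actions on \(\Omega\).

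As a cross-check, Theorem~\ref{thm:permutation-closure} gives \(\Gamma'\cong_{\mathrm{top}}\overline{\Theta(\Gamma_\Omega)}\). Since \(\Theta(\Gamma_\Omega)=\overline{\rho(G)}\) is already closed, this closure is \(\overline{\rho(G)}\cong_{\mathrm{top}}\Gamma_\Omega\).

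There is no real obstacle. The only point needing care is the first step: the equality of stable finite subsets rests on the orbit equality in Corollary~\ref{cor:orbits-coincide}.
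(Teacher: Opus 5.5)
Your proof is correct, but it takes a different route from the paper. In fact, the paper's whole argument is essentially your final ``cross-check''.

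The paper applies Theorem~\ref{thm:permutation-closure} to the induced action of $\Gamma_\Omega$. This identifies the resulting group with the closure of $\Theta(\Gamma_\Omega)$ in $\Sym(\Omega)$. The paper then notes that this image is already closed, being a compact image in a Hausdorff group, and that the action is faithful. It concludes via the homeomorphism $\Theta$.

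You instead compare the two inverse systems level by level. The index sets $\mathcal F_{\Gamma_\Omega}(\Omega)$ and $\mathcal F_G(\Omega)$ agree, the level groups agree as subgroups of $\Sym(A)$, and the transition maps are the same restrictions. So the systems are literally equal, and the canonical map $\gamma\mapsto(\gamma_A)_A$ is the identity. This gives a sharper conclusion: equality rather than mere isomorphism, with naturality and compatibility with the actions on $\Omega$ built in. It also needs no compactness, no closure description, and no compact--Hausdorff criterion.

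Your first step does not even require Corollary~\ref{cor:orbits-coincide}. A $G$-stable set is $\Gamma_\Omega$-stable because $\gamma a=\gamma_A(a)\in A$ by the definition of the induced action. A $\Gamma_\Omega$-stable set is $G$-stable because $\eta(g)$ acts as $g$.

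The paper's route, by contrast, is the one that generalizes, as the subsequent remark indicates. When an arbitrary profinite group acts faithfully and continuously on a discrete set, there is no inverse system indexed by the finite stable subsets given in advance to compare with. One then has to pass through the permutation closure and the compact--Hausdorff argument.
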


\begin{proof}
By Theorem~\ref{thm:permutation-closure}, the profinite group obtained from
the induced action of \(\Gamma_\Omega\) on \(\Omega\) is topologically
isomorphic to the closure of the permutation image of \(\Gamma_\Omega\) in
\(\Sym(\Omega)\). This image is closed, since it is the continuous image of
the compact group \(\Gamma_\Omega\) in the Hausdorff group \(\Sym(\Omega)\).
The action is faithful, because an element fixing all points of \(\Omega\) has
all its coordinates equal to the identity. Hence the closed permutation image
is topologically isomorphic to~\(\Gamma_\Omega\).
\end{proof}

\begin{remark}
The preceding corollary is a special case of the following general fact. If a
profinite group \(\Gamma\) acts faithfully and continuously on a discrete set
\(\Omega\), then all orbits are finite, since they are compact subsets of a
discrete space. Hence the finite-orbit construction applies, and
Theorem~\ref{thm:permutation-closure} identifies the associated profinite
group with the closure of the permutation image of \(\Gamma\) in
\(\Sym(\Omega)\). This image is closed because \(\Gamma\) is compact and
\(\Sym(\Omega)\) is Hausdorff. By faithfulness, it is topologically
isomorphic to \(\Gamma\).
\end{remark}

\begin{remark}
The realization of \(\Gamma_\Omega\) as \(\overline{\rho(G)}\) is related to
completion constructions arising from homomorphisms with dense image into
totally disconnected locally compact groups; see \cite{ReidWesolek}. In the present paper, the finite-orbit hypothesis leads
specifically to profinite completions. The main focus is the exactness property
\textup{FLEP} and the resulting fixed-point correspondence.
\end{remark}

\section{Finite-level exactness and the fixed-set correspondence}
\label{sec:flep}

Throughout this section, let \(G\) act on \(\Omega\) with \textup{FOP}, and let
\(\Gamma_\Omega\) act on \(\Omega\) as in Section~\ref{sec:gamma-action}.

For \(H\leq\Gamma_\Omega\), let
$
\Fix_\Omega(H):=\{\omega\in\Omega:h\omega=\omega
\text{ for every } h\in H\}$ denote the fixed-point set of \(H\) in \(\Omega\).

For \(X\subseteq\Omega\), let 
$\Stab_{\Gamma_\Omega}(X):=\{\gamma\in\Gamma_\Omega:\gamma x=x
\text{ for every } x\in X\}$
denote the pointwise stabilizer of \(X\) in \(\Gamma_\Omega\).

\begin{lemma}
\label{lem:stabilizers-are-closed}
For every subset \(X\subseteq\Omega\), the pointwise stabilizer
\(\Stab_{\Gamma_\Omega}(X)\) is a closed subgroup of \(\Gamma_\Omega\).
\end{lemma}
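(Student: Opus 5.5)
The plan is to write the pointwise stabilizer as an intersection of stabilizers of single points, and to show that each single-point stabilizer is open. Since open subgroups of a topological group are closed, this will finish the proof. Concretely,
\[
\Stab_{\Gamma_\Omega}(X)=\bigcap_{x\in X}\Stab_{\Gamma_\Omega}(\{x\}),
\]
which holds directly from the definition. Each \(\Stab_{\Gamma_\Omega}(\{x\})\) is a subgroup because the formula of Proposition~\ref{prop:gamma-action} is a group action. An intersection of subgroups is a subgroup, and an intersection of closed sets is closed, so it suffices to treat a single point \(x\in\Omega\). If \(X=\emptyset\), the stabilizer is all of \(\Gamma_\Omega\), which is closed.

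For a single point \(x\), put \(A:=Gx\), which lies in \(\mathcal F_G(\Omega)\) by \textup{FOP}. By definition of the induced action, \(\gamma x=\gamma_A(x)=\pi_A(\gamma)(x)\). Hence
\[
\Stab_{\Gamma_\Omega}(\{x\})=\pi_A^{-1}(Q),
\qquad
Q:=\{\sigma\in G_A:\sigma(x)=x\}.
\]
Here \(Q\) is a subgroup of \(G_A\), so Lemma~\ref{lem:open-subgroups-general} shows that this stabilizer is open. An open subgroup is closed, since its complement is a union of cosets, each of which is open. Alternatively, \(\pi_A\) is continuous and \(G_A\) is discrete, so the preimage of any subset is closed.

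There is no real obstacle. The only point needing care is checking that \(\gamma x\) depends only on \(\pi_A(\gamma)\), and this is exactly the well-definedness lemma of Section~\ref{sec:gamma-action}.
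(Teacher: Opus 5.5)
Your proof is correct, but it is organized differently from the paper's. The paper argues directly with the closure. It takes \(H=\Stab_{\Gamma_\Omega}(X)\), \(\gamma\in\overline H\) and \(x\in X\), and picks \(A\in\mathcal F_G(\Omega)\) containing \(x\). The coordinate form of Lemma~\ref{lem:closure-general} then gives \(h\in H\) with \(\pi_A(\gamma)=\pi_A(h)\), and it concludes \(\gamma x=hx=x\).

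You instead write \(\Stab_{\Gamma_\Omega}(X)\) as an intersection of point stabilizers. You show each one equals \(\pi_A^{-1}(Q)\) and is therefore open by Lemma~\ref{lem:open-subgroups-general}. Both arguments rest on the same observation: \(\gamma x\) is determined by \(\pi_A(\gamma)\) whenever \(x\in A\).

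Your route proves something stronger and reusable. Point stabilizers, and hence stabilizers of finite subsets, are open subgroups. This is precisely continuity of the action on the discrete set \(\Omega\), and it matches the permutation-topology picture of Theorem~\ref{thm:permutation-closure}. Closedness of arbitrary stabilizers then follows from the general fact that open subgroups are closed. The paper's route is a one-step approximation argument that needs only the closure formula and avoids the open-subgroup lemma, but it does not record the openness of point stabilizers.
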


\begin{proof}
Let \(H:=\Stab_{\Gamma_\Omega}(X)\). Clearly \(H\) is a subgroup of
\(\Gamma_\Omega\). We prove that \(\overline H\leq H\). Let
\(\gamma\in\overline H\) and \(x\in X\). Choose \(A\in\mathcal F_G(\Omega)\)
with \(x\in A\). By the coordinate interpretation following
Lemma~\ref{lem:closure-general}, there exists \(h^{(A)}\in H\) such that
\(\pi_A(\gamma)=\pi_A(h^{(A)})\). Since \(x\in A\), we have
$\gamma x=h^{(A)}x=x$.

Thus \(\gamma\) fixes every element of \(X\), so \(\gamma\in H\). Hence \(H\)
is closed.
\end{proof}

\begin{corollary}
\label{cor:fixed-points-closure}
For every subgroup \(H\leq\Gamma_\Omega\),
$\Fix_\Omega(H)=\Fix_\Omega(\overline H)$.
\end{corollary}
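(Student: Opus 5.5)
The plan is to prove the two inclusions separately, using only the order-reversing behaviour of \(\Fix_\Omega\) and Lemma~\ref{lem:stabilizers-are-closed}.

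\emph{First inclusion.} Since \(H\leq\overline H\), every point fixed by all elements of \(\overline H\) is in particular fixed by all elements of \(H\). Hence \(\Fix_\Omega(\overline H)\subseteq\Fix_\Omega(H)\). This step is immediate from the definitions.

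\emph{Reverse inclusion.} Put \(X:=\Fix_\Omega(H)\). By definition of the fixed-point set, every element of \(H\) fixes every point of \(X\), so \(H\leq\Stab_{\Gamma_\Omega}(X)\). By Lemma~\ref{lem:stabilizers-are-closed}, the pointwise stabilizer \(\Stab_{\Gamma_\Omega}(X)\) is a closed subgroup of \(\Gamma_\Omega\). Since it is closed and contains \(H\), it contains the closure, so
\[
\overline H\leq\Stab_{\Gamma_\Omega}(X).
\]
Thus every element of \(\overline H\) fixes every point of \(X\), which says exactly that \(X\subseteq\Fix_\Omega(\overline H)\).

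Combining the two inclusions gives \(\Fix_\Omega(H)=\Fix_\Omega(\overline H)\).

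\emph{Alternative route.} One could instead argue pointwise via the coordinate interpretation of closure following Lemma~\ref{lem:closure-general}. Given \(\gamma\in\overline H\) and \(\omega\in\Fix_\Omega(H)\), choose \(A\in\mathcal F_G(\Omega)\) containing \(\omega\) and an element \(h\in H\) with \(\pi_A(\gamma)=\pi_A(h)\). Then \(\gamma\omega=h\omega=\omega\).

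There is no genuine obstacle here. The only point requiring care is the reverse inclusion, and that is exactly what the closedness of stabilizers in Lemma~\ref{lem:stabilizers-are-closed} supplies.
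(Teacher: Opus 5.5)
Your proof is correct and is essentially the paper's argument. The paper also obtains the easy inclusion from \(H\leq\overline H\) and the reverse inclusion from the closedness of pointwise stabilizers (Lemma~\ref{lem:stabilizers-are-closed}); the only cosmetic difference is that it applies this to \(\Stab_{\Gamma_\Omega}(\{\omega\})\) for each \(\omega\in\Fix_\Omega(H)\), rather than to \(\Stab_{\Gamma_\Omega}(\Fix_\Omega(H))\) all at once.
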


\begin{proof}
The inclusion \(\Fix_\Omega(\overline H)\subseteq\Fix_\Omega(H)\) is immediate
from \(H\leq\overline H\). Conversely, let
\(\omega\in\Fix_\Omega(H)\). Then
$H\leq\Stab_{\Gamma_\Omega}(\{\omega\})$.

By Lemma~\ref{lem:stabilizers-are-closed}, this stabilizer is closed, hence it
contains \(\overline H\). Therefore
\(\omega\in\Fix_\Omega(\overline H)\).
\end{proof}

The preceding corollary shows that fixed-point sets cannot distinguish a
subgroup from its closure. Thus the fixed-point set construction cannot give an
exact correspondence for arbitrary subgroups. The best possible statement is
that arbitrary subgroups are recovered up to closure, while closed subgroups
are recovered exactly. This is the situation familiar from infinite Galois theory, where a subgroup
of the Galois group is determined by its fixed field only after passing to its
topological closure.

We now isolate the finite-level condition which ensures that the same
phenomenon holds for the profinite group \(\Gamma_\Omega\) acting on
\(\Omega\).

\begin{definition}[Finite-level exactness]
\label{def:FLEP}
The induced action of \(\Gamma_\Omega\) on \(\Omega\) is said to satisfy the
\emph{finite-level exactness property}, abbreviated \(\textup{FLEP}\), if
\[
U=\Stab_{\Gamma_\Omega}(\Fix_\Omega(U))
\]
for every open subgroup \(U\leq\Gamma_\Omega\).
\end{definition}
\begin{proposition}[Equivalent forms of \textup{FLEP}]
\label{prop:FLEP-equivalent-forms}
For the action of \(\Gamma_\Omega\) on \(\Omega\), the following conditions
are equivalent.

\begin{enumerate}
\item[\textup{(F1)}]
For every open subgroup \(U\leq\Gamma_\Omega\),
\(U=\Stab_{\Gamma_\Omega}(\Fix_\Omega(U))\).

\item[\textup{(F2)}]
For every open subgroup \(U\leq\Gamma_\Omega\) and every
\(\gamma\in\Gamma_\Omega\setminus U\), there exists
\(\omega\in\Fix_\Omega(U)\) such that \(\gamma\omega\neq\omega\).

\item[\textup{(F3)}]
For every subgroup \(H\leq\Gamma_\Omega\),
\(\overline H=\Stab_{\Gamma_\Omega}(\Fix_\Omega(H))\).

\item[\textup{(F4)}]
For every closed subgroup \(J\leq\Gamma_\Omega\),
\(J=\Stab_{\Gamma_\Omega}(\Fix_\Omega(J))\).
\end{enumerate}
\end{proposition}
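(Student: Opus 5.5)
We prove the cycle of implications (F1)$\Leftrightarrow$(F2), (F1)$\Rightarrow$(F3)$\Rightarrow$(F4)$\Rightarrow$(F1). Throughout we use the trivial inclusion $H\leq\Stab_{\Gamma_\Omega}(\Fix_\Omega(H))$, valid for every subgroup $H$. Combined with Lemma~\ref{lem:stabilizers-are-closed}, it upgrades to $\overline H\leq\Stab_{\Gamma_\Omega}(\Fix_\Omega(H))$.

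\textbf{(F1)$\Leftrightarrow$(F2).} Condition (F2) says exactly that every $\gamma\notin U$ lies outside $\Stab_{\Gamma_\Omega}(\Fix_\Omega(U))$. This is the inclusion $\Stab_{\Gamma_\Omega}(\Fix_\Omega(U))\leq U$, which together with the trivial inclusion is (F1).

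\textbf{(F3)$\Rightarrow$(F4)$\Rightarrow$(F1).} The implication (F3)$\Rightarrow$(F4) is immediate, since $J=\overline J$ for closed $J$. For (F4)$\Rightarrow$(F1), note that every open subgroup of a topological group is closed, because its complement is a union of cosets, each open.

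\textbf{(F1)$\Rightarrow$(F3).} This is the main step. Let $H\leq\Gamma_\Omega$ and $\gamma\in\Stab_{\Gamma_\Omega}(\Fix_\Omega(H))$. By Lemma~\ref{lem:closure-general}, it suffices to show $\gamma\in H\ker(\pi_A)$ for every $A\in\mathcal F_G(\Omega)$. Fix $A$ and set $U:=H\ker(\pi_A)$. Since $\ker(\pi_A)$ is normal, $U$ is a subgroup, and it is open by Lemma~\ref{lem:open-subgroups-general}. Because $H\leq U$, we have $\Fix_\Omega(U)\subseteq\Fix_\Omega(H)$, so $\gamma$ fixes $\Fix_\Omega(U)$ pointwise. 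Hence $\gamma\in\Stab_{\Gamma_\Omega}(\Fix_\Omega(U))=U$ by (F1). Intersecting over all $A$ gives $\gamma\in\overline H$. The reverse inclusion is the upgraded trivial inclusion noted at the start.

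The only real content is the last step: approximating an arbitrary subgroup by the open subgroups $H\ker(\pi_A)$ and using the antitonicity of $\Fix_\Omega$. The rest is bookkeeping.
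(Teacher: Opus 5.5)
Your proof is correct and follows essentially the same route as the paper: the key step approximates $H$ by the open subgroups $H\ker(\pi_A)$ and uses $\Fix_\Omega(H\ker(\pi_A))\subseteq\Fix_\Omega(H)$, exactly as in the paper's (F2)$\Rightarrow$(F3). The only difference is that you argue directly from (F1), whereas the paper argues contrapositively from (F2); given the trivial equivalence (F1)$\Leftrightarrow$(F2), this is immaterial.
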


\begin{proof}
\(\textup{(F1)} \Leftrightarrow \textup{(F2)}\).
The inclusion
\(H\leq \Stab_{\Gamma_\Omega}(\Fix_\Omega(H))\)
holds for every subgroup \(H\leq\Gamma_\Omega\). Hence, for an open subgroup
\(U\leq\Gamma_\Omega\), the equality
\(U=\Stab_{\Gamma_\Omega}(\Fix_\Omega(U))\) fails precisely when there exists
\(\gamma\in\Gamma_\Omega\setminus U\) that fixes every element of
\(\Fix_\Omega(U)\). Equivalently, equality holds precisely when, for every
\(\gamma\in\Gamma_\Omega\setminus U\), there exists
\(\omega\in\Fix_\Omega(U)\) such that \(\gamma\omega\neq\omega\). This is
exactly \textup{(F2)}.

\(\textup{(F2)} \Rightarrow \textup{(F3)}\).
Let \(H\leq\Gamma_\Omega\). Since \(\Stab_{\Gamma_\Omega}(\Fix_\Omega(H))\)
is closed and contains \(H\), it contains \(\overline H\). It remains to prove
the reverse inclusion.

Let \(\gamma\notin\overline H\). By Lemma~\ref{lem:closure-general}, there
exists \(A\in\mathcal F_G(\Omega)\) such that
$\gamma\notin H\ker(\pi_A)$.
Set \(U:=H\ker(\pi_A)\). Since
\(\ker(\pi_A)\) is normal, \(U\) is a subgroup of \(\Gamma_\Omega\). Moreover,
\(\ker(\pi_A)\leq U\), hence \(U\) is open by
Lemma~\ref{lem:open-subgroups-general}. Also \(H\leq U\), while
\(\gamma\notin U\).

By \textup{(F2)}, there exists \(\omega\in\Fix_\Omega(U)\) such that
\(\gamma\omega\neq\omega\). Since \(H\leq U\), we have
\(\Fix_\Omega(U)\subseteq\Fix_\Omega(H)\). Therefore \(\gamma\) does not fix
all points of \(\Fix_\Omega(H)\), and hence
\(\gamma\notin\Stab_{\Gamma_\Omega}(\Fix_\Omega(H))\). This proves
\(\Stab_{\Gamma_\Omega}(\Fix_\Omega(H))\leq\overline H\). Thus
\textup{(F3)} holds.

\(\textup{(F3)} \Rightarrow \textup{(F4)}\) is immediate by taking \(H=J\)
closed.

\(\textup{(F4)} \Rightarrow \textup{(F1)}\). Indeed, every open subgroup of a
profinite group is closed, so \textup{(F4)} applies in particular to open
subgroups.

Therefore the four conditions are equivalent.
\end{proof}

\begin{remark}
Condition \(\textup{(F3)}\) is the form most closely analogous to infinite
Galois theory: a subgroup is determined up to closure by its fixed-point set,
and closed subgroups are determined uniquely by their fixed-point sets.
\end{remark}

\subsection*{The fixed-point set correspondence}

For the remainder of this section, assume that the action of
\(\Gamma_\Omega\) on \(\Omega\) satisfies \textup{FLEP}.

Let \(\mathcal J(\Gamma_\Omega)\) be the lattice of closed subgroups of
\(\Gamma_\Omega\), ordered by inclusion, and let
\(\mathcal X(\Omega):=\{\Fix_\Omega(J):J\in\mathcal J(\Gamma_\Omega)\}\)
be the family of fixed-point sets arising from closed subgroups, again ordered
by inclusion.

\begin{theorem}[Fixed-point set correspondence]
\label{thm:fixed-set-correspondence}
The assignments \(J\mapsto\Fix_\Omega(J)\) and
\(X\mapsto\Stab_{\Gamma_\Omega}(X)\) define mutually inverse
inclusion-reversing bijections between
\(\mathcal J(\Gamma_\Omega)\) and \(\mathcal X(\Omega)\).
\end{theorem}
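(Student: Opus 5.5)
The plan is to check the two maps land in the right sets, compose them in both orders, and read off the order reversal. Write \(\Phi(J):=\Fix_\Omega(J)\) for \(J\in\mathcal J(\Gamma_\Omega)\) and \(\Psi(X):=\Stab_{\Gamma_\Omega}(X)\) for \(X\in\mathcal X(\Omega)\).

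\emph{Well-definedness.} By the definition of \(\mathcal X(\Omega)\), \(\Phi(J)\in\mathcal X(\Omega)\) for every \(J\in\mathcal J(\Gamma_\Omega)\). By Lemma~\ref{lem:stabilizers-are-closed}, \(\Psi(X)\) is a closed subgroup for every subset \(X\subseteq\Omega\), so \(\Psi(X)\in\mathcal J(\Gamma_\Omega)\).

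\emph{\(\Psi\circ\Phi=\mathrm{id}\).} Since FLEP is assumed, Proposition~\ref{prop:FLEP-equivalent-forms} gives condition (F4): \(J=\Stab_{\Gamma_\Omega}(\Fix_\Omega(J))\) for every closed \(J\). This is exactly \(\Psi(\Phi(J))=J\).

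\emph{\(\Phi\circ\Psi=\mathrm{id}\).} This step uses no FLEP, only the Galois-connection identity \(\Fix\circ\Stab\circ\Fix=\Fix\). Let \(X\in\mathcal X(\Omega)\) and write \(X=\Fix_\Omega(J)\) with \(J\) closed. Then
\[
\Phi(\Psi(X))=\Fix_\Omega\bigl(\Stab_{\Gamma_\Omega}(\Fix_\Omega(J))\bigr)=\Fix_\Omega(J)=X,
\]
where the middle equality uses (F4) once more. Alternatively, it follows from the formal inclusions
\[
X\subseteq\Fix_\Omega(\Stab_{\Gamma_\Omega}(X)),\qquad J\leq\Stab_{\Gamma_\Omega}(\Fix_\Omega(J)),
\]
together with the fact that \(\Fix_\Omega\) reverses inclusions. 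Hence \(\Phi\) and \(\Psi\) are mutually inverse bijections.

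\emph{Order reversal.} If \(J_1\leq J_2\), then every point fixed by \(J_2\) is fixed by \(J_1\), so \(\Fix_\Omega(J_2)\subseteq\Fix_\Omega(J_1)\). If \(X_1\subseteq X_2\), then anything fixing \(X_2\) pointwise fixes \(X_1\) pointwise, so \(\Stab_{\Gamma_\Omega}(X_2)\leq\Stab_{\Gamma_\Omega}(X_1)\). Because the maps are mutual inverses, each reverses inclusions in both directions: \(J_1\leq J_2\) if and only if \(\Fix_\Omega(J_2)\subseteq\Fix_\Omega(J_1)\).

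There is no real obstacle here, since the substantive content is (F4), already proved in Proposition~\ref{prop:FLEP-equivalent-forms}. The only point needing care is to restrict the second map to \(\mathcal X(\Omega)\) rather than to all subsets of \(\Omega\): for an arbitrary \(X\), \(\Fix_\Omega(\Stab_{\Gamma_\Omega}(X))\) may strictly contain \(X\).
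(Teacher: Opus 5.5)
Your proof is correct and follows the paper's argument. Condition (F4) gives \(\Stab_{\Gamma_\Omega}(\Fix_\Omega(J))=J\), the reverse composite is handled by writing \(X=\Fix_\Omega(J)\) with \(J\) closed, and inclusion reversal is checked directly.

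One small wording slip: you say the second composite needs no FLEP, but your displayed computation then invokes (F4). It is your alternative argument, via the formal inclusions \(X\subseteq\Fix_\Omega(\Stab_{\Gamma_\Omega}(X))\) and \(J\leq\Stab_{\Gamma_\Omega}(\Fix_\Omega(J))\), that actually justifies the FLEP-free claim.
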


\begin{proof}
Let \(J\in\mathcal J(\Gamma_\Omega)\). By \(\textup{FLEP}\), in the form
\(\textup{(F4)}\), we have
\[J=\Stab_{\Gamma_\Omega}(\Fix_\Omega(J)).\] 
Hence the composite
\(J\mapsto\Fix_\Omega(J)\mapsto
\Stab_{\Gamma_\Omega}(\Fix_\Omega(J))\) is the identity.

Conversely, let \(X\in\mathcal X(\Omega)\). Then \(X=\Fix_\Omega(J)\) for
some closed subgroup \(J\). The first part gives
\(\Stab_{\Gamma_\Omega}(X)=J\), and hence
\(\Fix_\Omega(\Stab_{\Gamma_\Omega}(X))=\Fix_\Omega(J)=X\). Thus the two
assignments are mutually inverse.

They reverse inclusions because \(J_1\leq J_2\) implies
\(\Fix_\Omega(J_2)\subseteq\Fix_\Omega(J_1)\), while \(X_1\subseteq X_2\)
implies
\(\Stab_{\Gamma_\Omega}(X_2)\leq\Stab_{\Gamma_\Omega}(X_1)\).
\end{proof}

\subsection*{Meets and joins}

We record how the fixed-point set correspondence behaves with respect to
intersections and closed generated subgroups. 

For closed subgroups
\(J,J^\prime\leq\Gamma_\Omega\), write \(\overline{\langle J,J^\prime\rangle}\) for the
closed subgroup generated by \(J\) and \(J^\prime\).

\begin{proposition}
\label{prop:lattice-formulas}
Assume \textup{FLEP}. Let \(J,J^\prime\leq\Gamma_\Omega\) be closed subgroups. Then
\[
\Fix_\Omega(\overline{\langle J,J^\prime\rangle})
=
\Fix_\Omega(J)\cap\Fix_\Omega(J^\prime).
\]
Moreover,
\[
\Fix_\Omega(J\cap J^\prime)
=
\Fix_\Omega\!\left(
\Stab_{\Gamma_\Omega}(\Fix_\Omega(J)\cup\Fix_\Omega(J^\prime))
\right).
\]
\end{proposition}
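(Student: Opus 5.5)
The plan is to prove both identities by elementary set-theoretic manipulations. FLEP is used only through the closed-subgroup form (F4) and the correspondence of Theorem~\ref{thm:fixed-set-correspondence}. The two facts driving everything are Corollary~\ref{cor:fixed-points-closure}, which says that fixed-point sets do not see closures, and the general identity $\Fix_\Omega(H_1\cup H_2)=\Fix_\Omega(H_1)\cap\Fix_\Omega(H_2)$ for arbitrary subsets of $\Gamma_\Omega$.

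For the first formula, note that a point is fixed by the abstract subgroup $\langle J,J'\rangle$ exactly when it is fixed by every element of $J\cup J'$. This holds because the set of elements fixing a given point is a subgroup. Hence $\Fix_\Omega(\langle J,J'\rangle)=\Fix_\Omega(J)\cap\Fix_\Omega(J')$. Applying Corollary~\ref{cor:fixed-points-closure} to $H=\langle J,J'\rangle$ replaces $\langle J,J'\rangle$ by its closure $\overline{\langle J,J'\rangle}$, which gives the first identity. This part does not even need FLEP.

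For the second formula, set $X=\Fix_\Omega(J)$ and $X'=\Fix_\Omega(J')$. First compute the stabilizer of the union:
\[
\Stab_{\Gamma_\Omega}(X\cup X')=\Stab_{\Gamma_\Omega}(X)\cap\Stab_{\Gamma_\Omega}(X').
\]
By FLEP in the form (F4), the right-hand side equals $J\cap J'$. Applying $\Fix_\Omega$ to both sides yields exactly the claimed equality. Along the way it is worth noting that $J\cap J'$ is closed, so it lies in $\mathcal J(\Gamma_\Omega)$. Consequently $\Fix_\Omega(J\cap J')$ belongs to $\mathcal X(\Omega)$ and is the join of $X$ and $X'$ in $\mathcal X(\Omega)$. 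This is the intended lattice interpretation, although it is not needed for the equality itself.

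I do not expect a real obstacle. The only point requiring care is the first formula: one must pass from the abstract generated subgroup to its closure, and this is precisely what Corollary~\ref{cor:fixed-points-closure} (via Lemma~\ref{lem:stabilizers-are-closed}) provides. In the second formula, the single place FLEP enters is the identification $\Stab_{\Gamma_\Omega}(\Fix_\Omega(J))=J$, and the same for $J'$.
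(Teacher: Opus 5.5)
Your proposal is correct and follows the paper's proof essentially step for step. The first formula comes from Corollary~\ref{cor:fixed-points-closure} applied to $\langle J,J'\rangle$, so, as you note, FLEP is not needed there. The second comes from $\Stab_{\Gamma_\Omega}(X\cup X')=\Stab_{\Gamma_\Omega}(X)\cap\Stab_{\Gamma_\Omega}(X')=J\cap J'$ via (F4), followed by applying $\Fix_\Omega$. Your side remark that $\Fix_\Omega(J\cap J')$ is the join of $X$ and $X'$ in $\mathcal X(\Omega)$ is also correct, because an inclusion-reversing bijection carries meets to joins.
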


\begin{proof}
A point \(\omega\in\Omega\) is fixed by
\(\overline{\langle J,J^\prime\rangle}\) if and only if it is fixed by
\(\langle J,J^\prime\rangle\), by
Corollary~\ref{cor:fixed-points-closure}. This holds if and only if
\(\omega\) is fixed by every element of \(J\) and every element of
\(J^\prime\). This gives the first formula.

For the second formula, \(\textup{FLEP}\) gives
\[
\Stab_{\Gamma_\Omega}(\Fix_\Omega(J))=J
\quad\text{and}\quad
\Stab_{\Gamma_\Omega}(\Fix_\Omega(J^\prime))=J^\prime.
\]
Hence the pointwise stabilizer of
\(\Fix_\Omega(J)\cup\Fix_\Omega(J^\prime)\) is \(J\cap J^\prime\). Therefore
\[
\Fix_\Omega(J\cap J^\prime)
=
\Fix_\Omega\!\left(
\Stab_{\Gamma_\Omega}(\Fix_\Omega(J)\cup\Fix_\Omega(J^\prime))
\right),
\]
as required.\end{proof}

\begin{remark}
Under the fixed-point correspondence, closed subgroup joins correspond to
intersections of fixed-point sets. Closed subgroup meets correspond to the
closure operation naturally induced on the family \(\mathcal X(\Omega)\).
\end{remark}

\section{The normal finite-quotient action}
\label{sec:normal-finite-quotient-action}

We now specialize the construction to a profinite group. The goal is twofold:
to recover the group from a canonical finite-orbit action, and to characterize
when this action satisfies \textup{FLEP}.

Let \(\Gamma\) be a profinite group, and let
\(\mathcal N_o(\Gamma):=\{N\trianglelefteq_o\Gamma\}\) be the family of open
normal subgroups of \(\Gamma\). Consider the coproduct
\[
\Omega_\Gamma:=\coprod_{N\in\mathcal N_o(\Gamma)}\Gamma/N,
\]
where \(\Gamma/N=\{\delta N:\delta\in\Gamma\}\) is the set of left cosets of
\(N\). Since every open subgroup of a profinite group has finite index, each
\(\Gamma/N\) is finite.

The group \(\Gamma\) acts on \(\Omega_\Gamma\) by left translation:
if \(\omega=\delta N\in\Gamma/N\) and \(\gamma\in\Gamma\), set
\(\gamma\omega:=\gamma\delta N\). This is independent of the choice of
\(\delta\), and defines a left action of \(\Gamma\) on
\(\Omega_\Gamma\). Its orbits are precisely the components \(\Gamma/N\), so
the action has \textup{FOP}.

A finite \(\Gamma\)-stable subset of \(\Omega_\Gamma\) is necessarily a finite
union of components. Hence every \(A\in\mathcal F_\Gamma(\Omega_\Gamma)\) has
the form
\[
A=\Gamma/N_1\coprod\cdots\coprod\Gamma/N_r
\]
for some \(N_1,\ldots,N_r\in\mathcal N_o(\Gamma)\). 

\noindent For such an \(A\), let
\(\rho_A:\Gamma\to\Sym(A)\), \(\rho_A(\gamma)(\omega)=\gamma\omega\), be the
restricted permutation representation, and let
\(\Gamma_A:=\rho_A(\Gamma)\).

The groups \(\Gamma_A\), with the
restriction maps \(r_{BA}:\Gamma_B\to\Gamma_A\), form an inverse system over
\(\mathcal F_\Gamma(\Omega_\Gamma)\); see~Lemma~\ref{lem:inversesystem}. 

\noindent Since \(A\subseteq B\) implies
\(\ker\rho_B\leq\ker\rho_A\), the finite quotients
\(\Gamma/\ker\rho_A\), with the natural quotient maps
\(q_{BA}(\gamma\ker\rho_B)=\gamma\ker\rho_A\), also form an inverse system.

\begin{lemma}
\label{lem:isoofprof}
The inverse systems
\[
(\Gamma/\ker\rho_A,q_{BA})_{A\subseteq B}
\qquad\text{and}\qquad
(\Gamma_A,r_{BA})_{A\subseteq B}
\]
are isomorphic. Consequently,
\[
\varprojlim_{A\in\mathcal F_\Gamma(\Omega_\Gamma)}
\Gamma/\ker\rho_A
\cong_{\mathrm{top}}
\varprojlim_{A\in\mathcal F_\Gamma(\Omega_\Gamma)}
\Gamma_A .
\]
\end{lemma}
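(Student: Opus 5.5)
For each \(A\in\mathcal F_\Gamma(\Omega_\Gamma)\), the plan is to use the first isomorphism theorem applied to the surjection \(\rho_A:\Gamma\to\Gamma_A\). This gives a group isomorphism
\[
\phi_A:\Gamma/\ker\rho_A\to\Gamma_A,\qquad \phi_A(\gamma\ker\rho_A)=\rho_A(\gamma).
\]
Both groups are finite and carry the discrete topology, so \(\phi_A\) is automatically a homeomorphism. I note that \(\ker\rho_A=N_1\cap\cdots\cap N_r\) is open, so the quotient is indeed finite. This observation is not strictly needed, since \(\Gamma_A\leq\Sym(A)\) is finite anyway.

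Next I check that the family \((\phi_A)_A\) is a morphism of inverse systems. That is, for \(A\subseteq B\) I need \(r_{BA}\circ\phi_B=\phi_A\circ q_{BA}\). Evaluating on \(\gamma\ker\rho_B\), the left side is \(r_{BA}(\rho_B(\gamma))=\rho_A(\gamma)\) by the definition of restriction. The right side is \(\phi_A(\gamma\ker\rho_A)=\rho_A(\gamma)\). Since each \(\phi_A\) is an isomorphism, the inverses \(\phi_A^{-1}\) are also compatible, and so the two inverse systems are isomorphic.

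Finally, an isomorphism of inverse systems of finite discrete groups induces a map on limits, \((x_A)_A\mapsto(\phi_A(x_A))_A\). This map lands in the limit by compatibility. It is a group isomorphism whose inverse is induced by \((\phi_A^{-1})_A\). It is continuous because its composite with each projection \(\pi_A\) equals \(\phi_A\circ\pi_A\), which is continuous. The same argument shows the inverse is continuous, so the map is a topological isomorphism.

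No step is a real obstacle. The only care needed is to confirm that \(q_{BA}\) is well defined, which follows from \(\ker\rho_B\leq\ker\rho_A\) (an element acting trivially on \(B\) acts trivially on \(A\)), and that the naturality square commutes, which I have verified above.
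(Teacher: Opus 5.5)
Your proposal is correct and follows essentially the same route as the paper: you apply the first isomorphism theorem levelwise, check the compatibility \(r_{BA}\circ\phi_B=\phi_A\circ q_{BA}\) by evaluating on \(\gamma\ker\rho_B\), and pass to the inverse limit. The only difference is that you spell out why the induced map and its inverse are continuous (via composites with the projections), whereas the paper simply asserts this from the finiteness and discreteness of the levels.
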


\begin{proof}
For each \(A\), the first isomorphism theorem gives an isomorphism

\noindent \(\overline\rho_A:\Gamma/\ker\rho_A\to\Gamma_A\), defined by
\(\overline\rho_A(\gamma\ker\rho_A)=\rho_A(\gamma)\). 

\noindent If \(A\subseteq B\),
then for every \(\gamma\in\Gamma\),
\[
r_{BA}\bigl(\overline\rho_B(\gamma\ker\rho_B)\bigr)
=
\rho_A(\gamma)
=
\overline\rho_A\bigl(q_{BA}(\gamma\ker\rho_B)\bigr).
\]
Thus the maps \(\overline\rho_A\) are compatible with the transition maps, and
therefore give an isomorphism of inverse systems. Since all groups involved are
finite and discrete, the induced isomorphism on inverse limits is a
topological isomorphism.
\end{proof}

\begin{lemma}
\label{lem:kernels-normal-quotient-action}
The family \((\ker\rho_A)_{A\in\mathcal F_\Gamma(\Omega_\Gamma)}\) coincides
with \(\mathcal N_o(\Gamma)\).
\end{lemma}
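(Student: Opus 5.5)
We prove the two inclusions between the sets \(\{\ker\rho_A: A\in\mathcal F_\Gamma(\Omega_\Gamma)\}\) and \(\mathcal N_o(\Gamma)\). The first step is to compute \(\ker\rho_A\) explicitly. Write \(A=\Gamma/N_1\coprod\cdots\coprod\Gamma/N_r\) as above. An element \(\gamma\in\Gamma\) acts trivially on \(A\) exactly when it acts trivially on each component \(\Gamma/N_i\). For a single component \(\Gamma/N\) with \(N\) normal, \(\gamma\) fixes every coset \(\delta N\) if and only if \(\delta^{-1}\gamma\delta\in N\) for all \(\delta\). By normality of \(N\), this holds if and only if \(\gamma\in N\). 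Hence the kernel of the action on \(\Gamma/N\) is \(N\) itself, and
\[
\ker\rho_A = N_1\cap\cdots\cap N_r .
\]
(If \(A=\emptyset\) is admitted, then \(\ker\rho_A=\Gamma\), which is also open and normal; this case is consistent with the empty intersection.)

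For the inclusion \(\{\ker\rho_A\}\subseteq\mathcal N_o(\Gamma)\), we use that a finite intersection of open normal subgroups is again open and normal. Hence each \(\ker\rho_A\) lies in \(\mathcal N_o(\Gamma)\).

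For the reverse inclusion, take \(N\in\mathcal N_o(\Gamma)\) and set \(A:=\Gamma/N\), a single component. This set is finite because \(N\) is open, and it is \(\Gamma\)-stable because it is an orbit, so \(A\in\mathcal F_\Gamma(\Omega_\Gamma)\). By the computation above, \(\ker\rho_A=N\). This gives the reverse inclusion, so the two families coincide.

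The only point requiring care is the kernel computation for a single component: one must use normality of \(N\), so that the core \(\bigcap_\delta \delta N\delta^{-1}\) equals \(N\). Everything else is routine bookkeeping with the description of finite \(\Gamma\)-stable subsets as finite unions of components, which was recorded before the lemma.
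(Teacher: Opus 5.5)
Your proof is correct and follows the paper's argument: compute \(\ker\rho_A=N_1\cap\cdots\cap N_r\) component by component, then realize each \(N\in\mathcal N_o(\Gamma)\) as the kernel of the action on the single component \(A=\Gamma/N\). You also spell out two points the paper leaves implicit: the normality step showing that the kernel on \(\Gamma/N\) is exactly \(N\), and the case \(A=\emptyset\).
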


\begin{proof}
Let \(A=\Gamma/N_1\coprod\cdots\coprod\Gamma/N_r\). The left-translation
action of \(\Gamma\) on the component \(\Gamma/N_i\) has kernel \(N_i\).
Therefore the kernel of the restricted action on \(A\) is
$\ker\rho_A=N_1\cap\cdots\cap N_r$.
This is an open normal subgroup of \(\Gamma\).

Conversely, if \(N\in\mathcal N_o(\Gamma)\), take \(A=\Gamma/N\). The kernel
of the action of \(\Gamma\) on this component is exactly \(N\). Hence every
member of \(\mathcal N_o(\Gamma)\) occurs as some \(\ker\rho_A\).
\end{proof}

We now apply the finite-orbit construction to the action of \(\Gamma\) on
\(\Omega_\Gamma\), and write
\[
\Gamma_{\Omega_\Gamma}:=
\varprojlim_{A\in\mathcal F_\Gamma(\Omega_\Gamma)}\Gamma_A .
\]

\begin{theorem}
\label{thm:normal-quotient-action-recovers-gamma}
There is a canonical topological isomorphism
\[
\Gamma_{\Omega_\Gamma}\cong_{\mathrm{top}}\Gamma .
\]
\end{theorem}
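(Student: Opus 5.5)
The plan is to chain Lemma~\ref{lem:isoofprof} and Lemma~\ref{lem:kernels-normal-quotient-action} with the standard fact that a profinite group is the inverse limit of its finite quotients by open normal subgroups. By Lemma~\ref{lem:isoofprof}, \(\Gamma_{\Omega_\Gamma}\) is topologically isomorphic to \(\varprojlim_A \Gamma/\ker\rho_A\), so it suffices to identify this limit canonically with \(\Gamma\).

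Consider the map \(\Psi:\Gamma\to\varprojlim_A\Gamma/\ker\rho_A\), \(\gamma\mapsto(\gamma\ker\rho_A)_A\). It is compatible with the maps \(q_{BA}\), and it is a continuous homomorphism, since each coordinate is a quotient map onto a finite discrete group. Injectivity follows from Lemma~\ref{lem:kernels-normal-quotient-action}: if \(\gamma\in\ker\rho_A\) for all \(A\), then \(\gamma\in\bigcap\mathcal N_o(\Gamma)\), and this intersection is trivial because \(\Gamma\) is profinite (Hausdorff and totally disconnected, so the open normal subgroups form a basis at the identity).

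For surjectivity, take a compatible family \((\gamma_A\ker\rho_A)_A\). The cosets \(\gamma_A\ker\rho_A\) are closed, nonempty subsets of the compact space \(\Gamma\). They have the finite intersection property: given \(A_1,\dots,A_n\), set \(B=A_1\cup\cdots\cup A_n\); compatibility gives \(\gamma_B\ker\rho_B\subseteq\gamma_{A_i}\ker\rho_{A_i}\) for each \(i\). Compactness then yields an element \(\gamma\) in the intersection, and \(\Psi(\gamma)\) equals the given family. A continuous bijection from a compact space to a Hausdorff space is a homeomorphism, so \(\Psi\) is a topological isomorphism.

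Composing \(\Psi\) with the isomorphism of Lemma~\ref{lem:isoofprof} gives the required canonical map \(\Gamma\to\Gamma_{\Omega_\Gamma}\), \(\gamma\mapsto(\rho_A(\gamma))_A\); this is exactly the map \(\eta\) of Section~\ref{sec:gamma-action}. Alternatively, the Remark after Corollary~\ref{cor:idempotence} gives the result directly, because the action is continuous (stabilizers are open) and faithful (kernel \(\bigcap N=1\)). The only step needing care is surjectivity, which is the compactness argument above. The directedness of \(\mathcal F_\Gamma(\Omega_\Gamma)\) under unions supplies the finite intersection property.
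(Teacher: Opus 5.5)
Your proof is correct and follows the same route as the paper. Lemma~\ref{lem:isoofprof} reduces the problem to $\varprojlim_A \Gamma/\ker\rho_A$, and this limit is then identified with $\Gamma$. The only difference is in that last step. You prove it directly over the index set $\mathcal F_\Gamma(\Omega_\Gamma)$ by the finite-intersection/compactness argument. The paper instead re-indexes via Lemma~\ref{lem:kernels-normal-quotient-action} and cites \cite[Proposition~1.3]{DDMS99}. Your version has the small advantage of not needing to justify that re-indexing, since $A\mapsto\ker\rho_A$ is surjective onto $\mathcal N_o(\Gamma)$ but not injective.
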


\begin{proof}
By Lemma~\ref{lem:isoofprof},
\[
\Gamma_{\Omega_\Gamma}
\cong_{\mathrm{top}}
\varprojlim_{A\in\mathcal F_\Gamma(\Omega_\Gamma)}\Gamma/\ker\rho_A.
\]
By Lemma~\ref{lem:kernels-normal-quotient-action}, the subgroups
\(\ker\rho_A\) are precisely the open normal subgroups of \(\Gamma\). Hence
\[
\varprojlim_{A\in\mathcal F_\Gamma(\Omega_\Gamma)}\Gamma/\ker\rho_A
\cong_{\mathrm{top}}
\varprojlim_{N\in\mathcal N_o(\Gamma)}\Gamma/N.
\]
The latter inverse limit is canonically topologically isomorphic to
\(\Gamma\); see \cite[Proposition~1.3]{DDMS99}. Therefore
\(\Gamma_{\Omega_\Gamma}\cong_{\mathrm{top}}\Gamma\).
\end{proof}

Let
$\rho:\Gamma\longrightarrow\Sym(\Omega_\Gamma)
$ 
be the permutation representation associated with the normal finite-quotient
action.

\begin{corollary}
\label{cor:normal-action-closed-image}
The group \(\Gamma\) is topologically isomorphic to \(\rho(\Gamma)\). In
particular, \(\Gamma\) is topologically isomorphic to a closed subgroup of
\(\Sym(\Omega_\Gamma)\).
\end{corollary}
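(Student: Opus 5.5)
The plan is to show that \(\rho\) is a continuous, injective homomorphism from the compact group \(\Gamma\) into the Hausdorff group \(\Sym(\Omega_\Gamma)\), and then to apply the compact--Hausdorff criterion, as in the proof of Theorem~\ref{thm:permutation-closure}. An alternative route goes through Theorem~\ref{thm:permutation-closure} together with Theorem~\ref{thm:normal-quotient-action-recovers-gamma}, identifying \(\Gamma\cong\Gamma_{\Omega_\Gamma}\cong\overline{\rho(\Gamma)}\). That route, however, requires checking that the composite isomorphism is induced by \(\rho\) itself, so the direct argument is cleaner.

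\emph{Injectivity.} If \(\rho(\gamma)=\mathrm{id}\), then \(\gamma\) acts trivially on each component \(\Gamma/N\). The kernel of the action on \(\Gamma/N\) is \(N\) (Lemma~\ref{lem:kernels-normal-quotient-action}), so \(\gamma\in\bigcap_{N\in\mathcal N_o(\Gamma)}N\). In a profinite group the open normal subgroups form a basis of neighbourhoods of the identity, and \(\Gamma\) is Hausdorff, so this intersection is trivial. Hence \(\gamma=1\).

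\emph{Continuity.} It suffices to check continuity at the identity. Let \(Y\subseteq\Omega_\Gamma\) be finite. Then \(Y\) lies in a finite union \(A=\Gamma/N_1\coprod\cdots\coprod\Gamma/N_r\), and the open subgroup \(N_1\cap\cdots\cap N_r=\ker\rho_A\) maps into the pointwise stabilizer \(W_Y\). So \(\rho\) is continuous.

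\emph{Conclusion.} Since \(\Gamma\) is compact and \(\Sym(\Omega_\Gamma)\) is Hausdorff, \(\rho(\Gamma)\) is compact, hence closed. The map \(\rho:\Gamma\to\rho(\Gamma)\) is a continuous bijection from a compact space onto a Hausdorff space, hence a homeomorphism, and therefore a topological group isomorphism.

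There is no real obstacle. The only point needing care is the injectivity step, which rests on the standard fact that \(\bigcap\mathcal N_o(\Gamma)=1\) for profinite \(\Gamma\). This is also implicit in the isomorphism \(\Gamma\cong\varprojlim\Gamma/N\) cited in Theorem~\ref{thm:normal-quotient-action-recovers-gamma}.
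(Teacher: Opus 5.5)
Your proof is correct, but it is not the route the paper takes. The paper deduces the corollary from its general machinery. Theorem~\ref{thm:normal-quotient-action-recovers-gamma} gives \(\Gamma\cong_{\mathrm{top}}\Gamma_{\Omega_\Gamma}\), and Theorem~\ref{thm:permutation-closure} gives \(\Gamma_{\Omega_\Gamma}\cong_{\mathrm{top}}\overline{\rho(\Gamma)}\). Compactness of \(\rho(\Gamma)\) then gives \(\overline{\rho(\Gamma)}=\rho(\Gamma)\).

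You instead show directly that \(\rho\) is a continuous injective homomorphism and apply the compact--Hausdorff criterion. The only input you need is \(\bigcap\mathcal N_o(\Gamma)=1\). This bypasses both theorems, including the reindexing of the inverse limit from \(\mathcal F_\Gamma(\Omega_\Gamma)\) to \(\mathcal N_o(\Gamma)\) that the proof of Theorem~\ref{thm:normal-quotient-action-recovers-gamma} passes over quickly.

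What your route buys:
\begin{itemize}
\item It is self-contained.
\item It shows that \(\rho\) itself is a topological embedding, not merely that some abstract isomorphism exists.
\item It verifies explicitly that \(\rho\) is continuous. The paper uses this tacitly when it says \(\rho(\Gamma)\) is compact.
\end{itemize}
What the paper's route buys is that the corollary appears as a formal instance of the reconstruction framework.

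One correction to how you framed the alternative. The statement only asserts that some topological isomorphism \(\Gamma\cong\rho(\Gamma)\) exists. So the paper's route does not need the composite isomorphism to be induced by \(\rho\). What it does need is continuity of \(\rho\), which is exactly what your continuity step supplies.
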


\begin{proof}
By Theorem~\ref{thm:normal-quotient-action-recovers-gamma},
\(\Gamma\cong_{\mathrm{top}}\Gamma_{\Omega_\Gamma}\). By
Theorem~\ref{thm:permutation-closure},
\(\Gamma_{\Omega_\Gamma}\cong_{\mathrm{top}}\overline{\rho(\Gamma)}\). Since
\(\Gamma\) is compact and \(\Sym(\Omega_\Gamma)\) is Hausdorff,
\(\rho(\Gamma)\) is compact and therefore closed. Thus
\(\overline{\rho(\Gamma)}=\rho(\Gamma)\), and the result follows.
\end{proof}

\subsection*{Exactness for the normal finite-quotient action}
\label{subsec:normal-quotient-action-exactness}

We have shown that the normal finite-quotient action reconstructs the
profinite group \(\Gamma\). We now determine when this action satisfies
\textup{FLEP}. Since the components \(\Gamma/N\) involve only open normal
subgroups \(N\), they do not directly detect arbitrary open subgroups. Thus exactness can hold only under an additional normality condition. We now
identify this condition.

Throughout this subsection, fixed points and stabilizers are taken with
respect to the normal finite-quotient action defined above. The following
lemma gives the basic computation needed for the \textup{FLEP} criterion.

\begin{lemma}
\label{lem:fix-normal-quotient-action}
For every subgroup \(U\leq\Gamma\),
\[
\Fix_{\Omega_\Gamma}(U)
=
\coprod_{\substack{N\in\mathcal N_o(\Gamma)\\ U\leq N}}\Gamma/N
\quad\text{and}\quad
\Stab_\Gamma(\Fix_{\Omega_\Gamma}(U))
=
\bigcap_{\substack{N\in\mathcal N_o(\Gamma)\\ U\leq N}}N.
\]
\end{lemma}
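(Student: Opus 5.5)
The plan is a direct computation, component by component, using that \(\Omega_\Gamma\) is the disjoint union of the finite sets \(\Gamma/N\) and that \(\Gamma\) acts on each component by left translation. Here fixed points and stabilizers are computed for the action of \(\Gamma\) itself. This is legitimate because, by Theorem~\ref{thm:normal-quotient-action-recovers-gamma} and Corollary~\ref{cor:normal-action-closed-image}, \(\Gamma\) is identified with \(\Gamma_{\Omega_\Gamma}\) compatibly with the action, since \(\eta\) is then a bijection.

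For the first formula, fix \(N\in\mathcal N_o(\Gamma)\) and a point \(\delta N\in\Gamma/N\). A subgroup \(U\) fixes \(\delta N\) if and only if \(u\delta N=\delta N\) for all \(u\in U\). This is equivalent to \(\delta^{-1}u\delta\in N\) for all \(u\in U\), that is, \(U\leq\delta N\delta^{-1}\). Because \(N\) is normal, \(\delta N\delta^{-1}=N\), so the condition reduces to \(U\leq N\), independently of \(\delta\). Hence each component \(\Gamma/N\) is either fixed pointwise by \(U\) (when \(U\leq N\)) or contains no fixed points at all. Taking the union over components gives \(\Fix_{\Omega_\Gamma}(U)=\coprod_{U\leq N}\Gamma/N\).

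For the second formula, the pointwise stabilizer of a disjoint union is the intersection of the pointwise stabilizers of its pieces. The pointwise stabilizer of a full component \(\Gamma/N\) is the kernel of the left-translation action on \(\Gamma/N\), which equals \(N\), as already observed in the proof of Lemma~\ref{lem:kernels-normal-quotient-action}. Therefore
\[
\Stab_\Gamma(\Fix_{\Omega_\Gamma}(U))=\bigcap_{U\leq N}N .
\]
If no \(N\in\mathcal N_o(\Gamma)\) contains \(U\), the empty intersection is read as \(\Gamma\). This case cannot occur anyway, since \(N=\Gamma\) always qualifies.

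There is no real obstacle. The only point needing care is the normality step \(\delta N\delta^{-1}=N\), which makes the fixed-point condition independent of the chosen coset and so forces each component to be fixed either entirely or not at all.
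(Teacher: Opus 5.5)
Your proposal is correct and follows essentially the same route as the paper: the coset computation \(\delta^{-1}u\delta\in N\), together with normality of \(N\), shows that each component \(\Gamma/N\) is fixed either entirely or not at all, and the stabilizer is then the intersection of the kernels \(N\) over those components. Your preliminary remark identifying \(\Gamma\) with \(\Gamma_{\Omega_\Gamma}\) via \(\eta\) is harmless but unnecessary, since the lemma is stated directly for the action of \(\Gamma\) on \(\Omega_\Gamma\).
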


\begin{proof}
Let \(N\in\mathcal N_o(\Gamma)\) and \(\delta N\in\Gamma/N\). For \(u\in U\),
one has \(u(\delta N)=u\delta N\). Thus \(\delta N\) is fixed by every element
of \(U\) if and only if \(u\delta N=\delta N\) for all \(u\in U\), equivalently
\(\delta^{-1}u\delta\in N\) for all \(u\in U\). Since \(N\) is normal, this is
equivalent to \(U\leq N\).

Hence, if \(U\leq N\), the whole component \(\Gamma/N\) is fixed pointwise by
\(U\), while if \(U\nleq N\), no point of that component is fixed by \(U\). This
proves the formula for \(\Fix_{\Omega_\Gamma}(U)\).

An element \(\gamma\in\Gamma\) fixes every point of
\(\Fix_{\Omega_\Gamma}(U)\) if and only if it fixes each component 
\(\Gamma/N\) pointwise, for every \(N\in\mathcal N_o(\Gamma)\) with
\(U\leq N\). Hence \(\gamma\) lies in
\(\Stab_\Gamma(\Fix_{\Omega_\Gamma}(U))\) if and only if
\(\gamma\in N\) for every \(N\in\mathcal N_o(\Gamma)\) with \(U\leq N\).
This gives the second formula.
\end{proof}

Recall that a group is called a Dedekind group if every subgroup is normal;
see \cite{RobinsonGroups}.

\begin{theorem}
\label{thm:FLEP-normal-quotient-action}
Let \(\Gamma\) be a profinite group, and let
\(\Omega_\Gamma=\coprod_{N\in\mathcal N_o(\Gamma)}\Gamma/N\) be the normal
finite-quotient \(\Gamma\)-set, with the left-translation action. 

The following
conditions are equivalent:
\begin{enumerate}
\item[\textup{(i)}] The action of \(\Gamma\) on \(\Omega_\Gamma\) satisfies
\textup{FLEP}.
\item[\textup{(ii)}] Every open subgroup of \(\Gamma\) is normal.
\item[\textup{(iii)}] For every \(N\in\mathcal N_o(\Gamma)\), the finite group
\(\Gamma/N\) is a Dedekind group.
\end{enumerate}
\end{theorem}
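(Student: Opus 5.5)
We first transport everything to \(\Gamma\) itself. By Theorem~\ref{thm:normal-quotient-action-recovers-gamma} and Corollary~\ref{cor:normal-action-closed-image}, the map \(\gamma\mapsto\eta(\gamma)\) identifies \(\Gamma\) topologically with \(\Gamma_{\Omega_\Gamma}\). Under this identification the induced action of \(\Gamma_{\Omega_\Gamma}\) on \(\Omega_\Gamma\) is the left-translation action, because \(\eta(\gamma)\omega=\gamma\omega\). Hence open subgroups, fixed-point sets and pointwise stabilizers may all be computed directly in \(\Gamma\). In particular, \textup{FLEP} in the form (F1) says that \(U=\Stab_\Gamma(\Fix_{\Omega_\Gamma}(U))\) for every open subgroup \(U\leq\Gamma\). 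By Lemma~\ref{lem:fix-normal-quotient-action}, this is equivalent to
\[
U=\bigcap_{\substack{N\in\mathcal N_o(\Gamma)\\ U\leq N}}N
\quad\text{for every open } U\leq\Gamma .
\]

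(i)\(\Rightarrow\)(ii): The right-hand side above is an intersection of normal subgroups, so it is normal. Under (i), every open \(U\) is therefore normal.

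(ii)\(\Rightarrow\)(i): If \(U\) is open and normal, then \(U\in\mathcal N_o(\Gamma)\) is itself one of the \(N\) in the intersection. The intersection is thus contained in \(U\), and it always contains \(U\), so equality holds.

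(ii)\(\Rightarrow\)(iii): Let \(N\in\mathcal N_o(\Gamma)\) and let \(\bar Q\leq\Gamma/N\). Its preimage \(Q\) under the quotient map contains the open subgroup \(N\), so \(Q\) is open by the standard fact that a subgroup containing an open subgroup is a union of its cosets. By (ii), \(Q\) is normal, and hence \(\bar Q=Q/N\) is normal in \(\Gamma/N\).

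(iii)\(\Rightarrow\)(ii): Let \(U\) be open in \(\Gamma\). The key input is that \(U\) contains some open normal subgroup \(N\). This holds because \(U\) has finite index, so its normal core \(\bigcap_{\gamma}\gamma U\gamma^{-1}\) is a finite intersection of open conjugates; alternatively, one can use that \(\mathcal N_o(\Gamma)\) is a neighbourhood basis of \(1\) in a profinite group. Then \(U/N\) is a subgroup of the Dedekind group \(\Gamma/N\), so it is normal, and its preimage \(U\) is normal in \(\Gamma\).

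The only step requiring care is the initial transport, namely checking that open subgroups and the \(\Fix\)/\(\Stab\) operators for \(\Gamma_{\Omega_\Gamma}\) correspond to those for \(\Gamma\) under the isomorphism. This correspondence follows because the isomorphism is topological and intertwines the two actions on \(\Omega_\Gamma\).
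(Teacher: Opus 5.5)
Your proof is correct and follows the paper's argument essentially step for step, with the same four implications resting on Lemma~\ref{lem:fix-normal-quotient-action} and on an open normal subgroup lying inside each open subgroup. The one addition is your explicit transport along \(\eta\colon\Gamma\to\Gamma_{\Omega_\Gamma}\), which is indeed the isomorphism of Theorem~\ref{thm:normal-quotient-action-recovers-gamma} and intertwines the actions. The paper uses this identification only implicitly when it computes \(\Fix\) and \(\Stab\) directly in \(\Gamma\).
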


\begin{proof}
\(\textup{(i)}\Rightarrow\textup{(ii)}\). Assume that the action satisfies
\textup{FLEP}, and let \(U\) be an open subgroup of \(\Gamma\). Then
\(U=\Stab_\Gamma(\Fix_{\Omega_\Gamma}(U))\). By
Lemma~\ref{lem:fix-normal-quotient-action},
\[
U=\bigcap_{\substack{N\in\mathcal N_o(\Gamma)\\ U\leq N}}N.
\]
The right-hand side is an intersection of normal subgroups of \(\Gamma\). 
 Therefore \(U\) is normal in \(\Gamma\). Hence every open subgroup of
\(\Gamma\) is normal.

\(\textup{(ii)}\Rightarrow\textup{(i)}\). Let \(U\leq\Gamma\) be open. By
assumption, \(U\in\mathcal N_o(\Gamma)\). Hence \(U\) itself occurs among the
open normal subgroups \(N\) with \(U\leq N\). Therefore
\[
\bigcap_{\substack{N\in\mathcal N_o(\Gamma)\\ U\leq N}}N=U.
\]
By Lemma~\ref{lem:fix-normal-quotient-action},
$\Stab_\Gamma(\Fix_{\Omega_\Gamma}(U))=U$.

Thus the action satisfies \textup{FLEP}.

\noindent \(\textup{(ii)}\Rightarrow\textup{(iii)}\). Let
\(N\in\mathcal N_o(\Gamma)\), and let \(Q\leq\Gamma/N\). Then \(Q=U/N\) for
some subgroup \(U\leq\Gamma\) containing \(N\). Since \(N\) is open, \(U\) is
open in \(\Gamma\). By assumption, \(U\) is a normal subgroup of \(\Gamma\), and therefore
\(Q=U/N\) is a normal subgroup of \(\Gamma/N\). Thus every subgroup of \(\Gamma/N\) is
normal, so \(\Gamma/N\) is a Dedekind group.

\(\textup{(iii)}\Rightarrow\textup{(ii)}\). Assume that every finite quotient
\(\Gamma/N\), with \(N\in\mathcal N_o(\Gamma)\), is a Dedekind group. Let
\(U\leq_o\Gamma\). Since the open normal subgroups form a neighbourhood basis
of the identity, there exists \(N\in\mathcal N_o(\Gamma)\) with \(N\leq U\).
Then \(U/N\leq\Gamma/N\), hence \(U/N\) is a normal subgroup of  \(\Gamma/N\). It follows
that \(U\) is a normal subgroup of  \(\Gamma\). Therefore every open subgroup of \(\Gamma\)
is normal.
\end{proof}
The preceding theorem shows that the normal finite-quotient action need not
satisfy \textup{FLEP}. Its exactness is determined precisely by the condition that all quotients
\(\Gamma/N\), with \(N\in\mathcal N_o(\Gamma)\), are Dedekind groups. This gives the
following class of profinite groups with an exact fixed-set correspondence.

\begin{corollary}[Galois-type correspondence for profinite Dedekind limits]
Let \((D_i,d_{ji})_{i\leq j}\) be an inverse system of finite Dedekind groups
with surjective transition maps, and let \(\Gamma:=\varprojlim_i D_i\). 

Then
the normal finite-quotient action of \(\Gamma\) on
\(\Omega_\Gamma=\coprod_{N\in\mathcal N_o(\Gamma)}\Gamma/N\) satisfies
\textup{FLEP}. Consequently, the assignments
\[
J\longmapsto \Fix_{\Omega_\Gamma}(J),
\qquad
X\longmapsto \Stab_\Gamma(X)
\]
define mutually inverse inclusion-reversing bijections between the closed
subgroups \(J\leq\Gamma\) and the fixed subsets
\(X=\Fix_{\Omega_\Gamma}(J)\) arising from closed subgroups.
\end{corollary}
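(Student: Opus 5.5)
The plan is to reduce the statement to Theorem~\ref{thm:FLEP-normal-quotient-action} together with Theorem~\ref{thm:fixed-set-correspondence}. The only real work is to verify condition (iii) of Theorem~\ref{thm:FLEP-normal-quotient-action}: every finite quotient \(\Gamma/N\), with \(N\in\mathcal N_o(\Gamma)\), is a Dedekind group. Equivalently, by (ii), every open subgroup of \(\Gamma\) is normal.

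First, let \(p_i:\Gamma\to D_i\) be the canonical projections. Since the transition maps are surjective and the \(D_i\) are finite, each \(p_i\) is surjective; this is the standard fact that inverse limits of nonempty compact spaces with surjective bonding maps project onto each term. The kernels \(\ker p_i\) form a neighbourhood basis of the identity consisting of open normal subgroups. Hence, given \(N\in\mathcal N_o(\Gamma)\), some \(\ker p_i\leq N\), by compactness and the definition of the inverse limit topology.

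Second, \(\Gamma/N\) is then a quotient of \(\Gamma/\ker p_i\cong D_i\). A quotient of a Dedekind group is Dedekind, since a subgroup of \(D_i/K\) has the form \(Q/K\) with \(Q\) normal in \(D_i\). So \(\Gamma/N\) is Dedekind, and Theorem~\ref{thm:FLEP-normal-quotient-action} gives \textup{FLEP}.

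Alternatively, one can avoid surjectivity of \(p_i\) in the second step by working directly with (ii). Let \(U\) be an open subgroup of \(\Gamma\). Then \(U\supseteq\ker p_i\) for some \(i\), so \(U=p_i^{-1}(p_i(U))\). The subgroup \(p_i(U)\) is normal in \(D_i\) because \(D_i\) is Dedekind. Preimages of normal subgroups are normal, so \(U\) is normal. This version is cleaner, and I would present it.

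Finally, the identification \(\Gamma_{\Omega_\Gamma}\cong_{\mathrm{top}}\Gamma\) of Theorem~\ref{thm:normal-quotient-action-recovers-gamma} is compatible with the actions on \(\Omega_\Gamma\). Indeed, by Corollary~\ref{cor:normal-action-closed-image}, \(\Gamma\) acts through the closed faithful image \(\rho(\Gamma)\). So \textup{FLEP} for \(\Gamma_{\Omega_\Gamma}\) is the same as for \(\Gamma\), and Theorem~\ref{thm:fixed-set-correspondence} yields the stated bijection. The main obstacle is minor: the neighbourhood-basis claim for the kernels \(\ker p_i\). This follows from the definition of the product topology restricted to the limit and the directedness of the index set.
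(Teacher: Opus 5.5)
Your proposal is correct. Your first route is the paper's argument. Surjective transition maps make every $\Gamma/N$, with $N\in\mathcal N_o(\Gamma)$, a quotient of some $D_i$, and quotients of Dedekind groups are Dedekind. Condition (iii) of Theorem~\ref{thm:FLEP-normal-quotient-action} together with Theorem~\ref{thm:fixed-set-correspondence} then finishes.

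The version you say you would present is slightly different: you check condition (ii) directly. You write an open $U\supseteq\ker p_i$ as $p_i^{-1}(p_i(U))$, note that $p_i(U)$ is normal in the Dedekind group $D_i$, and use that preimages of normal subgroups under any homomorphism are normal. This needs only that the kernels $\ker p_i$ form a neighbourhood basis of the identity, which comes from directedness. It does not use surjectivity of the $p_i$ or of the transition maps, so it shows that the surjectivity hypothesis in the corollary can be dropped. The paper's route uses that hypothesis as written. It could also avoid it by noting that $\Gamma/\ker p_i\cong p_i(\Gamma)\leq D_i$ and that subgroups of Dedekind groups are Dedekind.

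Your final paragraph makes explicit a point the paper uses tacitly. \textup{FLEP} and Theorem~\ref{thm:fixed-set-correspondence} are formulated for $\Gamma_{\Omega_\Gamma}$. They transfer to $\Gamma$ because the isomorphism $\Gamma\to\Gamma_{\Omega_\Gamma}$, $\gamma\mapsto(\rho_A(\gamma))_A$, is a topological isomorphism that intertwines the two actions on $\Omega_\Gamma$.

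One small remark: compactness is not needed to find $\ker p_i\leq N$. Openness of $N$ and the neighbourhood-basis property suffice.
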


\begin{proof}
Since the transition maps are surjective, every finite continuous quotient of
\(\Gamma\) is a quotient of some \(D_i\). Hence, for every
\(N\in\mathcal N_o(\Gamma)\), the finite group \(\Gamma/N\) is a quotient of
some \(D_i\). Since quotients of Dedekind groups are Dedekind, every
\(\Gamma/N\) is Dedekind. The result now follows from
Theorem~\ref{thm:FLEP-normal-quotient-action} and
Theorem~\ref{thm:fixed-set-correspondence}.
\end{proof}

Thus inverse limits of finite Dedekind groups provide a concrete class of
profinite groups for which reconstruction is accompanied by an exact
fixed-set correspondence.

\section{Application to infinite Galois theory}
\label{sec:galois}

We now apply the finite-orbit construction to field extensions. The
finite-orbit hypothesis used here is the field-theoretic analogue of the local
finiteness condition for automorphism groups of commutative rings studied by
Cruthirds~\cite{Cruthirds}.

Let \(E\) be a field and let \(G\leq\Aut(E)\). Thus \(G\) acts on \(E\) by
field automorphisms. In the notation of the preceding sections, the fixed set
of this action is
\[
\Fix_E(G)=\{x\in E:g(x)=x\text{ for all }g\in G\}.
\]
In field theory this fixed set is the fixed field of \(G\), usually denoted by
\(E^G\). We set \(F:=E^G=\Fix_E(G)\).

\begin{proposition}
\label{prop:FOP-galois-equivalence}
The action of \(G\) on \(E\) has \textup{FOP} if and only if \(E/F\) is
Galois.
\end{proposition}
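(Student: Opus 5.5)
For the forward direction, I will assume \textup{FOP} and show that \(E/F\) is algebraic, separable and normal, which I take as the definition of (possibly infinite) Galois. Fix \(x\in E\) and let \(Gx=\{x_1=x,x_2,\dots,x_n\}\) be its orbit, which is finite by hypothesis and consists of pairwise distinct elements. The key construction is the polynomial
\[
f_x(T):=\prod_{i=1}^n (T-x_i)\in E[T].
\]
Each \(g\in G\) permutes \(Gx\). Hence applying \(g\) to the coefficients of \(f_x\) permutes the linear factors, so \(f_x\) is unchanged and its coefficients lie in \(E^G=F\). Thus \(x\) is a root of a separable polynomial in \(F[T]\) that splits in \(E\). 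It follows that \(x\) is algebraic and separable over \(F\), and that its minimal polynomial, which divides \(f_x\), splits in \(E\). Since \(x\) is arbitrary, \(E/F\) is algebraic, separable and normal, hence Galois.

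For the converse, I will assume \(E/F\) is Galois with \(F=E^G\), and take \(x\in E\) with minimal polynomial \(m_x\in F[T]\). Every \(g\in G\) fixes \(F\), so \(g(x)\) is again a root of \(m_x\). Therefore \(Gx\) is contained in the finite set of roots of \(m_x\) in \(E\), and \(|Gx|\le\deg m_x\). This direction in fact needs only that \(E/F\) is algebraic.

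Everything here is elementary, and I do not see a genuine obstacle. The one point needing care is the convention for ``Galois'' in the infinite case. I would state explicitly that it means algebraic, normal and separable. If one instead uses the definition \(F=E^{\Aut(E/F)}\) together with algebraicity, the argument still goes through: \(G\leq\Aut(E/F)\) gives \(E^{\Aut(E/F)}\subseteq E^G=F\), so \(F\) is exactly the fixed field of \(\Aut(E/F)\). I would also note that the forward direction does not need \(E/F\) to be finite; it works elementwise, so Artin's theorem is not required at this stage.
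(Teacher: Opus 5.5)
Your proposal is correct and follows essentially the same route as the paper: the orbit polynomial \(\prod_{y\in Gx}(T-y)\) has \(G\)-fixed coefficients, so it lies in \(F[T]\), and the minimal polynomial of \(x\) divides it, which gives separability and normality. For the converse, \(g(x)\) is always an \(F\)-conjugate of \(x\), so \(Gx\) is finite.
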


\begin{proof}
Assume first that the action has \textup{FOP}. 

\noindent For \(x\in E\), the orbit
\(Gx=\{g(x):g\in G\}\) is finite. Hence the polynomial
\[
p_x(T):=\prod_{y\in Gx}(T-y)
\]
has coefficients fixed by \(G\), so \(p_x(T)\in F[T]\). Since its roots are
distinct and \(x\) is one of them, \(x\) is algebraic and separable over \(F\).
If \(m_x(T)\in F[T]\) is the minimal polynomial of \(x\), then
\(m_x(T)\) divides \(p_x(T)\), so all roots of \(m_x(T)\) are distinct and lie in \(Gx\subseteq
E\). Hence \(E/F\) is separable and normal, therefore Galois.

Conversely, assume that \(E/F\) is Galois. For \(x\in E\), every element
\(g(x)\) of its orbit \(Gx\) is an \(F\)-conjugate of \(x\). Since \(x\) has
only finitely many \(F\)-conjugates, \(Gx\) is finite. Hence the action has
\textup{FOP}.\end{proof}

From now on, assume that the equivalent conditions of
Proposition~\ref{prop:FOP-galois-equivalence} hold. Thus \(E/F\) is Galois.

Let \(\mathcal F_G(E)\) be the directed set of finite \(G\)-stable subsets of
\(E\). 

\noindent For \(A\in\mathcal F_G(E)\), set \(L_A:=F(A)\), the subfield of \(E\)
generated over \(F\) by the elements of~\(A\).

\begin{lemma}
\label{lem:finite-level-galois}
For every \(A\in\mathcal F_G(E)\), the extension \(L_A/F\) is finite Galois.
\end{lemma}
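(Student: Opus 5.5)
The plan is to show that \(L_A=F(A)\) is the splitting field over \(F\) of a separable polynomial with coefficients in \(F\). Let \(A\in\mathcal F_G(E)\). Since \(A\) is finite and \(G\)-stable, the polynomial
\[
p_A(T):=\prod_{a\in A}(T-a)
\]
has coefficients fixed by every \(g\in G\). This holds because each \(g\) permutes \(A\), and applying \(g\) to the coefficients permutes the linear factors. Hence \(p_A(T)\in F[T]\). Its roots are the distinct elements of \(A\), so \(p_A\) is separable, and it splits over \(L_A\) with roots generating \(L_A\) over \(F\). Thus \(L_A\) is a splitting field of a separable polynomial over \(F\).

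From this the conclusion is standard. \(L_A/F\) is finite because it is generated by finitely many algebraic elements, each a root of \(p_A\). It is normal because it is a splitting field. It is separable because it is generated by separable elements. So \(L_A/F\) is finite Galois.

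Alternatively, one can use normality directly, avoiding the splitting-field formulation. Since \(E/F\) is Galois by Proposition~\ref{prop:FOP-galois-equivalence}, each \(a\in A\) is separable over \(F\), so \(L_A/F\) is finite separable. For normality, any \(F\)-embedding of \(L_A\) into an algebraic closure sends each \(a\in A\) to a root of \(m_a\). By the proof of Proposition~\ref{prop:FOP-galois-equivalence}, those roots lie in \(Ga\subseteq A\subseteq L_A\), so the embedding maps \(L_A\) into itself.

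The only point requiring care is the claim that the coefficients of \(p_A\) lie in \(F=E^G\). This uses \(G\)-stability of \(A\) in the form \(gA=A\), so that \(g\) acts bijectively on the set of roots. No other step is expected to be an obstacle.
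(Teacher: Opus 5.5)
Your proof is correct and is essentially the paper's argument: \(G\)-stability of \(A\) puts the coefficients of \(p_A(T)=\prod_{a\in A}(T-a)\) in \(F=E^G\), \(p_A\) is separable with root set \(A\), and \(L_A=F(A)\) is its splitting field. Your alternative normality argument, using that the roots of \(m_a\) lie in \(Ga\subseteq A\) by the proof of Proposition~\ref{prop:FOP-galois-equivalence}, is also valid but not needed.
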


\begin{proof}
Write \(A=\{a_1,\ldots,a_n\}\). Since \(A\) is \(G\)-stable, the polynomial
\(p_A(T):=\prod_{i=1}^n(T-a_i)\) has coefficients in \(F\). The field
\(L_A=F(A)\) is the splitting field over \(F\) of the separable polynomial
\(p_A(T)\). Hence \(L_A/F\) is finite Galois.
\end{proof}

For \(A\in\mathcal F_G(E)\), let \(\rho_A:G\to\Sym(A)\) be the permutation
representation induced by the action of \(G\) on \(A\), with image
\(G_A:=\rho_A(G)\).

Given \(g\in\Aut(E)\) and a subfield \(K\subseteq E\), we denote by
\(g|_{K}\) the restriction of \(g\) to \(K\).

\begin{lemma}
\label{lem:finite-level-restrictions}
For every \(A\in\mathcal F_G(E)\), the set
\(\{g|_{L_A}:g\in G\}\) is a subgroup of \(\Gal(L_A/F)\), and equals
\(\Gal(L_A/F)\).
\end{lemma}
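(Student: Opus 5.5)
We need to prove two things about the restriction map: that the set \(R_A:=\{g|_{L_A}:g\in G\}\) is a subgroup of \(\Gal(L_A/F)\), and that it exhausts \(\Gal(L_A/F)\).

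\emph{Step 1: restriction is well defined.} For \(g\in G\), the set \(A\) is \(G\)-stable, so \(g(A)=A\). Since \(g\) fixes \(F\) pointwise, it maps \(L_A=F(A)\) onto \(F(g(A))=F(A)=L_A\). Hence \(g|_{L_A}\) is an \(F\)-automorphism of \(L_A\). The map \(G\to\Gal(L_A/F)\), \(g\mapsto g|_{L_A}\), is then a group homomorphism, because restriction commutes with composition. So its image \(R_A\) is a subgroup of \(\Gal(L_A/F)\).

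\emph{Step 2: the image is everything.} This is the main point, and I would use Artin's theorem rather than any extension argument. \(R_A\) is a finite group of automorphisms of \(L_A\); it is finite because \(\Gal(L_A/F)\) is finite by Lemma~\ref{lem:finite-level-galois}. Artin's theorem then gives
\[
[L_A:L_A^{R_A}]=|R_A|.
\]
Next I identify the fixed field. If \(x\in L_A\) is fixed by every \(g|_{L_A}\), then \(x\) is fixed by every \(g\in G\), so \(x\in E^G=F\). Conversely \(F\subseteq L_A^{R_A}\). Hence \(L_A^{R_A}=F\), and so \(|R_A|=[L_A:F]\). Finally, \(L_A/F\) is finite Galois by Lemma~\ref{lem:finite-level-galois}, so \(|\Gal(L_A/F)|=[L_A:F]\). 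Since \(R_A\leq\Gal(L_A/F)\) and the two groups have the same finite order, they are equal.

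The only delicate step is the fixed-field identification \(L_A^{R_A}=F\). It holds because \(F\) is defined as the fixed field of all of \(G\) in \(E\), and \(L_A\subseteq E\). No Zorn-type extension of automorphisms from \(L_A\) to \(E\) is needed, since every element of \(R_A\) already comes from some \(g\in G\).
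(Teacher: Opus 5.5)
Your proof is correct and follows essentially the paper's argument: you show the restrictions form a subgroup of \(\Gal(L_A/F)\), identify the fixed field \(L_A^{R_A}=L_A\cap E^G=F\), and conclude by Artin's theorem. The only cosmetic difference is that you use the degree form \([L_A:L_A^{R_A}]=|R_A|\) and compare it with \(|\Gal(L_A/F)|=[L_A:F]\), whereas the paper invokes the form \(\Gal(L_A/L_A^{H_A})=H_A\) directly.
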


\begin{proof}
Let $H_A:=\{g|_{L_A}:g\in G\}$.
Since \(A\) is \(G\)-stable and \(L_A=F(A)\), every \(g\in G\) restricts to an
\(F\)-automorphism of \(L_A\). Thus
$H_A\leq\Gal(L_A/F)$.

We show that the fixed field of \(H_A\) in \(L_A\) is \(F\). Indeed,
\[
L_A^{H_A}
=
\{x\in L_A:g(x)=x\text{ for every }g\in G\}
=
L_A\cap E^G
=
F.
\]
By Artin's theorem applied to the finite group \(H_A\leq\Aut(L_A)\), the
extension \(L_A/L_A^{H_A}\) is Galois and
$\Gal(L_A/L_A^{H_A})=H_A$.

Since \(L_A^{H_A}=F\), this gives
$H_A=\Gal(L_A/F)$.
\end{proof}

\begin{lemma}
\label{lem:theta-A-galois}
For every \(A\in\mathcal F_G(E)\), there is a canonical group isomorphism
\[
\theta_A:G_A\longrightarrow\Gal(L_A/F),
\qquad
\theta_A(\rho_A(g))=g|_{L_A}.
\]
\end{lemma}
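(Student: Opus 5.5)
The plan is to define \(\theta_A\) on generators, check that it is well defined and injective through a single equivalence, and then get surjectivity from Lemma~\ref{lem:finite-level-restrictions}.

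The key fact is the following. For \(g,h\in G\),
\[
\rho_A(g)=\rho_A(h)\iff g|_{L_A}=h|_{L_A}.
\]
For the forward direction, suppose \(g\) and \(h\) agree on \(A\). Both are field automorphisms fixing \(F\) pointwise, since \(F=E^G\). The subfield \(L_A=F(A)\) consists of rational expressions in elements of \(A\) with coefficients in \(F\). Hence \(g\) and \(h\) agree on all of \(L_A\). Equivalently, the set \(\{x\in E: g(x)=h(x)\}\) is a subfield of \(E\) containing \(F\cup A\), so it contains \(L_A\). The backward direction is immediate because \(A\subseteq L_A\).

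With this equivalence in hand, the argument runs as follows.

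\begin{enumerate}
\item \emph{Well defined.} The forward direction shows that \(\theta_A(\rho_A(g)):=g|_{L_A}\) does not depend on the choice of \(g\) representing a given element of \(G_A\).
\item \emph{Values in \(\Gal(L_A/F)\).} By Lemma~\ref{lem:finite-level-restrictions}, each \(g|_{L_A}\) lies in \(\Gal(L_A/F)\).
\item \emph{Homomorphism.} Since \(g(L_A)=L_A\), we have \((gh)|_{L_A}=g|_{L_A}\circ h|_{L_A}\). Together with \(\rho_A(gh)=\rho_A(g)\rho_A(h)\), this shows \(\theta_A\) is a homomorphism.
\item \emph{Injective.} This is the backward direction of the equivalence.
\item \emph{Surjective.} This is exactly the equality \(\{g|_{L_A}:g\in G\}=\Gal(L_A/F)\) from Lemma~\ref{lem:finite-level-restrictions}.
\end{enumerate}

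Canonicity is clear, since no choices enter the definition of \(\theta_A\).

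The only step with real content is the forward direction of the equivalence, namely that agreement on \(A\) forces agreement on \(F(A)\). It rests on the fact that every element of \(G\) fixes \(F\) pointwise, so I would state that explicitly. Everything else is formal, given Lemma~\ref{lem:finite-level-restrictions}.
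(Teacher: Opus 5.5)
Your proposal is correct and follows essentially the same route as the paper. Both arguments get well-definedness from the fact that agreement on \(A\) forces agreement on \(L_A=F(A)\), because elements of \(G\) fix \(F\) pointwise; injectivity from \(A\subseteq L_A\); and surjectivity from Lemma~\ref{lem:finite-level-restrictions}.
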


\begin{proof}
If \(\rho_A(g)=\rho_A(h)\), then \(g\) and \(h\) agree on \(A\). Since both
fix \(F\) pointwise and \(L_A=F(A)\), they agree on \(L_A\). Hence
\(\theta_A\) is well-defined. It is plainly a homomorphism. It is injective
because equality of the restrictions to \(L_A\) implies equality on \(A\), and
therefore equality in \(G_A\). It is surjective by
Lemma~\ref{lem:finite-level-restrictions}.
\end{proof}

For \(A\subseteq B\), we have \(L_A\subseteq L_B\). We denote by
\(r_{BA}:G_B\to G_A\) the restriction homomorphism
\(r_{BA}(\rho_B(g))=\rho_A(g)\). 

\noindent We also denote by
\(\operatorname{res}_{L_B/L_A}:\Gal(L_B/F)\to\Gal(L_A/F)\),
\(\sigma\mapsto\sigma|_{L_A}\), the usual restriction homomorphism.
\begin{lemma}
\label{lem:theta-compatible}
Let \(A,B\in\mathcal F_G(E)\) with \(A\subseteq B\). Then the diagram
\[
\begin{tikzcd}
G_B \arrow[r, "\theta_B"] \arrow[d, "r_{BA}"']
&
\Gal(L_B/F) \arrow[d, "\operatorname{res}_{L_B/L_A}"]
\\
G_A \arrow[r, "\theta_A"']
&
\Gal(L_A/F)
\end{tikzcd}
\]
commutes.
\end{lemma}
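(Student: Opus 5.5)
The plan is to check commutativity on generators. Every element of \(G_B\) has the form \(\rho_B(g)\) for some \(g\in G\), since \(G_B=\rho_B(G)\) by definition. So it suffices to fix an arbitrary \(g\in G\) and compute both composites on \(\rho_B(g)\), using only the defining formulas of \(\theta_A\), \(\theta_B\) and \(r_{BA}\). Well-definedness of these maps is already given by Lemma~\ref{lem:theta-A-galois} and by the lemma on \(r_{BA}\) in Section~\ref{sec:fop}.

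Going down and then right, \(r_{BA}(\rho_B(g))=\rho_A(g)\), and therefore \(\theta_A(r_{BA}(\rho_B(g)))=\theta_A(\rho_A(g))=g|_{L_A}\). Going right and then down, \(\theta_B(\rho_B(g))=g|_{L_B}\), which is an element of \(\Gal(L_B/F)\) by Lemma~\ref{lem:finite-level-restrictions}. Its image under \(\operatorname{res}_{L_B/L_A}\) is \((g|_{L_B})|_{L_A}\).

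It remains to identify \((g|_{L_B})|_{L_A}\) with \(g|_{L_A}\). Since \(A\subseteq B\), we have \(L_A=F(A)\subseteq F(B)=L_B\). Restricting \(g\) first to \(L_B\) and then to the subfield \(L_A\) is the same as restricting \(g\) directly to \(L_A\), so \((g|_{L_B})|_{L_A}=g|_{L_A}\). Hence both composites agree on every element of \(G_B\), and the diagram commutes.

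I do not expect a genuine obstacle here. The only point needing care is that the restriction map \(\operatorname{res}_{L_B/L_A}\) lands in \(\Gal(L_A/F)\). This holds because \(L_A/F\) is Galois by Lemma~\ref{lem:finite-level-galois}, so \(L_A\) is stable under every \(F\)-automorphism of \(L_B\). Alternatively, \(g|_{L_A}\in\Gal(L_A/F)\) directly, by Lemma~\ref{lem:finite-level-restrictions}, since \(A\) is \(G\)-stable.
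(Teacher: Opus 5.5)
Your proof is correct and is essentially the paper's argument. Both evaluate the two composites on an arbitrary element \(\rho_B(g)\) of \(G_B\) and observe that each equals \(g|_{L_A}\); your added remark that \(\operatorname{res}_{L_B/L_A}\) lands in \(\Gal(L_A/F)\) is a harmless extra check.
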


\begin{proof}
For \(g\in G\),
$\operatorname{res}_{L_B/L_A}\bigl(\theta_B(\rho_B(g))\bigr)
=
g|_{L_A}
=
\theta_A\bigl(r_{BA}(\rho_B(g))\bigr)$.

Hence the diagram commutes.
\end{proof}
\noindent We now introduce the two profinite groups to be compared. 

The finite-orbit
construction gives
$\Gamma_E:=\varprojlim_{A\in\mathcal F_G(E)}G_A$.

The finite Galois extensions \(L_A/F\) give
$\Lambda_E:=\varprojlim_{A\in\mathcal F_G(E)}\Gal(L_A/F)$,
where the transition maps are the restrictions
\(\operatorname{res}_{L_B/L_A}\).

\begin{proposition}
\label{prop:gamma-lambda-E}
There is a canonical topological isomorphism
\[
\Gamma_E\cong_{\mathrm{top}}\Lambda_E.
\]
\end{proposition}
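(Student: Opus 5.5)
The plan is to obtain the isomorphism directly from the level-wise isomorphisms already constructed. Lemma~\ref{lem:theta-A-galois} supplies, for each \(A\in\mathcal F_G(E)\), a canonical group isomorphism \(\theta_A:G_A\to\Gal(L_A/F)\). Lemma~\ref{lem:theta-compatible} shows that these are compatible with the transition maps: \(\operatorname{res}_{L_B/L_A}\circ\theta_B=\theta_A\circ r_{BA}\) whenever \(A\subseteq B\). Both inverse systems are indexed by the same directed set \(\mathcal F_G(E)\). Hence \((\theta_A)_A\) is a morphism of inverse systems whose components are all isomorphisms, so it is an isomorphism of inverse systems, with inverse \((\theta_A^{-1})_A\). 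The inverse maps are compatible as well, since for \(\sigma\in\Gal(L_B/F)\) one has \(\theta_A^{-1}\circ\operatorname{res}_{L_B/L_A}(\sigma)=r_{BA}\circ\theta_B^{-1}(\sigma)\), which is obtained by applying \(\theta_A^{-1}\) and \(\theta_B^{-1}\) to the commutative square.

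Next I define \(\theta:\Gamma_E\to\Lambda_E\) by \(\theta((\gamma_A)_A)=(\theta_A(\gamma_A))_A\). Compatibility of the \(\theta_A\) guarantees that the image is again a compatible family. The map \(\theta\) is a homomorphism because it is one coordinatewise. It is bijective, since \((\theta_A^{-1})_A\) induces a two-sided inverse on the limits. For continuity, I use that the composite \(\mathrm{pr}_A\circ\theta=\theta_A\circ\pi_A\) is continuous for every \(A\), where \(\mathrm{pr}_A\) is the projection of \(\Lambda_E\) and all finite groups carry the discrete topology. By the universal property of the inverse-limit topology, \(\theta\) is then continuous, and the same argument applies to \(\theta^{-1}\). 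Alternatively, one can invoke compactness of \(\Gamma_E\) together with the Hausdorff property of \(\Lambda_E\), exactly as in the proof of Theorem~\ref{thm:permutation-closure}. The argument mirrors Lemma~\ref{lem:isoofprof}.

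There is no real obstacle here. The only point needing care is canonicity: \(\theta\) depends only on the maps \(\theta_A(\rho_A(g))=g|_{L_A}\), and these involve no choices. It is also worth recording the formula \(\theta(\eta(g))=(g|_{L_A})_A\) for \(g\in G\), since this is what will link \(\Lambda_E\) to the Krull group \(\Gal(E/F)\) in the next step.
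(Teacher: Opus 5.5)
Your proposal is correct and follows the paper's proof: the level-wise isomorphisms \(\theta_A\) from Lemma~\ref{lem:theta-A-galois} are compatible by Lemma~\ref{lem:theta-compatible}, so they form an isomorphism of inverse systems and induce a topological isomorphism \(\Gamma_E\to\Lambda_E\). Your explicit continuity argument (via the projections, or via compact--Hausdorff) simply spells out the paper's brief remark that the groups are finite and discrete.
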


\begin{proof}
By Lemma~\ref{lem:theta-compatible}, the isomorphisms
\(\theta_A:G_A\to\Gal(L_A/F)\) form an isomorphism of inverse systems
\[
(G_A,r_{BA})_{A\subseteq B}
\qquad\text{and}\qquad
(\Gal(L_A/F),\operatorname{res}_{L_B/L_A})_{A\subseteq B}.
\]
They therefore induce an isomorphism \(\Gamma_E\to\Lambda_E\). Since all
groups in the inverse systems are finite and discrete, the induced isomorphism
is a homeomorphism.
\end{proof}

Since the action has \textup{FOP}, for every \(x\in E\) the orbit \(Gx\) is
finite and belongs to \(\mathcal F_G(E)\). Since \(x\in F(Gx)=L_{Gx}\), we have
\[
E=\bigcup_{A\in\mathcal F_G(E)}L_A.
\]
The group \(\Lambda_E\) acts naturally on \(E\). Indeed, if
\(\lambda=(\lambda_A)_A\in\Lambda_E\) and \(x\in E\), choose
\(A\in\mathcal F_G(E)\) such that \(x\in L_A\), and define
\(\lambda(x):=\lambda_A(x)\). This is independent of the choice of \(A\), by
compatibility. Compatibility also shows that this defines a left action of
\(\Lambda_E\) on \(E\).

\begin{proposition}
\label{prop:lambda-E-FLEP}
The induced action of \(\Lambda_E\) on \(E\) satisfies \textup{FLEP}.
\end{proposition}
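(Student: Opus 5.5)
We verify condition (F2) of Proposition~\ref{prop:FLEP-equivalent-forms} for the action of \(\Lambda_E\) on \(E\). First note that the framework of Section~\ref{sec:flep} applies: via the isomorphism \(\Gamma_E\cong_{\mathrm{top}}\Lambda_E\) of Proposition~\ref{prop:gamma-lambda-E}, induced by the \(\theta_A\), the action of \(\Lambda_E\) on \(E\) is the induced action of \(\Gamma_E\). Indeed, for \(x\in A\) we have \(\theta_A(\gamma_A)(x)=\gamma_A(x)\), and an element of \(\Gal(L_A/F)\) is determined by its values on \(A\). The open subgroups of \(\Lambda_E\) are therefore described by Lemma~\ref{lem:open-subgroups-general}, with \(\pi_A\) now the projection \(\Lambda_E\to\Gal(L_A/F)\).

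Let \(U\leq\Lambda_E\) be open and \(\lambda\in\Lambda_E\setminus U\). By Lemma~\ref{lem:open-subgroups-general}, choose \(A\in\mathcal F_G(E)\) and \(Q\leq\Gal(L_A/F)\) with \(U=\pi_A^{-1}(Q)\). Since \(\lambda\notin U\), we have \(\lambda_A\notin Q\). Put \(K:=L_A^Q\). By the fundamental theorem of Galois theory for the finite Galois extension \(L_A/F\) (Lemma~\ref{lem:finite-level-galois}), \(\Gal(L_A/K)=Q\). Since \(\lambda_A\notin Q\), there exists \(\omega\in K\) with \(\lambda_A(\omega)\neq\omega\), and hence \(\lambda\omega=\lambda_A(\omega)\neq\omega\).

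It remains to show \(\omega\in\Fix_E(U)\). For \(u\in U\) we have \(u_A=\pi_A(u)\in Q\). Since \(\omega\in L_A\), the action gives \(u\omega=u_A(\omega)=\omega\), because \(Q\) fixes \(K\) pointwise. Thus \(\omega\in\Fix_E(U)\) and \(\lambda\omega\neq\omega\), which is (F2). By Proposition~\ref{prop:FLEP-equivalent-forms}, FLEP holds.

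The main point requiring care is the first step: making sure FLEP is meaningful for \(\Lambda_E\). Either one transports the action along the isomorphism of Proposition~\ref{prop:gamma-lambda-E}, as above, or one observes that the proof of Proposition~\ref{prop:FLEP-equivalent-forms} uses only that the \(\ker\pi_A\) form a basis of open normal subgroups and that point stabilizers are closed. Both facts hold verbatim for \(\Lambda_E\), since every \(x\in E\) lies in some \(L_A\). The separation step itself is just finite Galois theory: \(\Gal(L_A/L_A^Q)=Q\).
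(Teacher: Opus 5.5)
Your proposal is correct and matches the paper's proof: verify (F2) by writing \(U=\pi_A^{-1}(Q)\), use \(\Gal(L_A/L_A^Q)=Q\) to find an element of \(L_A^Q\) moved by \(\lambda_A\), and check that this element lies in \(\Fix_E(U)\). Your preliminary check is a point the paper leaves implicit, and your justification of it via \(\theta_A(\gamma_A)(x)=\gamma_A(x)\) for \(x\in A\) is correct; it shows that the \(\Lambda_E\)-action is the transported \(\Gamma_E\)-action, so FLEP and Lemma~\ref{lem:open-subgroups-general} apply to \(\Lambda_E\).
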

\begin{proof}
We verify condition \(\textup{(F2)}\) of
Proposition~\ref{prop:FLEP-equivalent-forms}. Let \(U\leq\Lambda_E\) be open
and let \(\lambda\in\Lambda_E\setminus U\). By the description of open
subgroups in an inverse limit of finite groups, there exist
\(A\in\mathcal F_G(E)\) and a subgroup \(Q\leq\Gal(L_A/F)\) such that
$U=\pi_A^{-1}(Q)$.

Since \(\lambda\notin U\), we have \(\lambda_A\notin Q\).
By finite Galois theory,
$Q=\Gal(L_A/L_A^Q)$.
Hence \(\lambda_A\notin\Gal(L_A/L_A^Q)\), so there exists
\(x\in L_A^Q\) such that
$\lambda_A(x)\neq x$.

We claim that \(x\in\Fix_E(U)\). Let \(u=(u_B)_B\in U\). Since
\(U=\pi_A^{-1}(Q)\), one has \(u_A\in Q\). Since \(x\in L_A^Q\), it follows
that \(u_A(x)=x\). By the definition of the action of \(\Lambda_E\) on \(E\),
this gives \(u(x)=x\). Hence \(x\in\Fix_E(U)\).

On the other hand, since \(x\in L_A\), the action of \(\lambda\) on \(x\) is
given by the \(A\)-coordinate \(\lambda_A\). Therefore
$\lambda(x)=\lambda_A(x)\neq x$.

Thus there exists \(x\in\Fix_E(U)\) such that \(\lambda(x)\neq x\). This is
condition \(\textup{(F2)}\), and therefore the induced action of
\(\Lambda_E\) on \(E\) satisfies \textup{FLEP}.
\end{proof}

The natural topology on \(\Gal(E/F)\) for comparison with \(\Gamma_E\) is the
Krull topology. Indeed, via the canonical isomorphisms
\(G_A\simeq\Gal(L_A/F)\), the finite permutation levels defining
\(\Gamma_E\) are identified with the Galois groups of the finite Galois
subextensions \(L_A/F\). Hence the natural comparison with \(\Gal(E/F)\) is
given by the restriction homomorphisms
\[
\Gal(E/F)\longrightarrow \Gal(L_A/F),
\qquad
\sigma\longmapsto \sigma|_{L_A}.
\]
The topology induced by these finite-level restriction maps is precisely the
Krull topology. Indeed, if \(L/F\) is a finite Galois subextension of \(E/F\),
then \(L\) is the splitting field over \(F\) of a separable polynomial
\(f(T)\in F[T]\). If \(A\) is the set of roots of \(f\) in \(E\), then
\(A\in\mathcal F_G(E)\) and \(L=L_A\). Equivalently, a neighbourhood basis of
the identity is given by the subgroups \(\Gal(E/L)\), where \(L/F\) ranges
over the finite Galois subextensions of \(E/F\); see, for example,
\cite{ConradInfiniteGalois, LangAlgebra}.
\begin{theorem}
\label{thm:GammaE-GalEF}
There is a canonical topological isomorphism
\[
\Gamma_E\cong_{\mathrm{top}}\Gal(E/F),
\]
where \(\Gal(E/F)\) is endowed with the Krull topology.
\end{theorem}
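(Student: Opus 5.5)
By Proposition~\ref{prop:gamma-lambda-E} we already have a canonical topological isomorphism \(\Gamma_E\cong_{\mathrm{top}}\Lambda_E\). It therefore suffices to produce a canonical topological isomorphism \(\Gal(E/F)\cong_{\mathrm{top}}\Lambda_E\), where \(\Gal(E/F)\) carries the Krull topology.

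We define
\[
\Phi:\Gal(E/F)\to\Lambda_E,\qquad \Phi(\sigma)=(\sigma|_{L_A})_{A\in\mathcal F_G(E)}.
\]
Each \(L_A/F\) is finite Galois by Lemma~\ref{lem:finite-level-galois}, so \(\sigma|_{L_A}\in\Gal(L_A/F)\). The family \((\sigma|_{L_A})_A\) is compatible under \(\operatorname{res}_{L_B/L_A}\), and \(\Phi\) is clearly a homomorphism.

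\emph{Injectivity.} Since \(E=\bigcup_A L_A\), an automorphism \(\sigma\) with trivial restriction to every \(L_A\) is the identity.

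\emph{Surjectivity.} Given \(\lambda=(\lambda_A)_A\in\Lambda_E\), define \(\sigma(x):=\lambda_A(x)\) for any \(A\) with \(x\in L_A\). This is exactly the action of \(\Lambda_E\) on \(E\) described before Proposition~\ref{prop:lambda-E-FLEP}, so it is well defined. To see that \(\sigma\) is a field automorphism, take \(x,y\in E\), choose \(A,B\) with \(x\in L_A\) and \(y\in L_B\), and set \(C=A\cup B\). Then \(x,y\in L_C\), and additivity and multiplicativity of \(\sigma\) on \(x,y\) follow from the fact that \(\lambda_C\) is a field automorphism of \(L_C\). The map \(\sigma\) fixes \(F\) pointwise. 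Its inverse is obtained in the same way from \((\lambda_A^{-1})_A\), so \(\sigma\) is bijective. By construction \(\Phi(\sigma)=\lambda\).

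\emph{Continuity.} This is the one step that needs care: the Krull topology is defined using all finite Galois subextensions \(L/F\), not only the fields \(L_A\). By the paragraph preceding the theorem, every finite Galois subextension has the form \(L=L_A\) for some \(A\in\mathcal F_G(E)\), namely the set of roots in \(E\) of a separable polynomial of which \(L\) is the splitting field. The basic open subgroups of \(\Lambda_E\) are the \(\ker(\pi_A)\), and
\[
\Phi^{-1}(\ker\pi_A)=\Gal(E/L_A),
\]
which is a basic Krull-open subgroup. Hence \(\Phi\) is continuous at the identity, and therefore continuous. (In the other direction, every Krull basic neighbourhood \(\Gal(E/L)=\Gal(E/L_A)\) is the preimage of \(\ker\pi_A\), so \(\Phi\) is also open. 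We do not need this below.)

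\emph{Homeomorphism.} To conclude that \(\Phi\) is a homeomorphism, either use the openness just noted, or use compactness. The group \(\Gal(E/F)\) with the Krull topology is compact; this follows from the standard Krull theory, or directly from the bijection \(\Phi\) combined with the identification of the bases of neighbourhoods above. The group \(\Lambda_E\) is Hausdorff. A continuous bijection from a compact space to a Hausdorff space is a homeomorphism.

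Composing \(\Phi^{-1}\) with the isomorphism of Proposition~\ref{prop:gamma-lambda-E} gives the required canonical topological isomorphism \(\Gamma_E\cong_{\mathrm{top}}\Gal(E/F)\). Explicitly, it sends \(\gamma\) to the automorphism that acts on each \(L_A\) by \(\theta_A(\gamma_A)\).
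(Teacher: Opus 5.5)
Your proposal is correct and follows the paper's proof essentially step for step. You reduce to \(\Lambda_E\) via Proposition~\ref{prop:gamma-lambda-E}, show that the restriction map \(\Gal(E/F)\to\Lambda_E\) is bijective by gluing compatible families over \(E=\bigcup_A L_A\), get continuity from \(\Phi^{-1}(\ker\pi_A)=\Gal(E/L_A)\), and conclude by the compact--Hausdorff criterion. (Minor point: continuity only needs \(\Gal(E/L_A)\) to be Krull-open, which is immediate; the fact that every finite Galois subextension equals some \(L_A\) is used only in your optional openness argument.)
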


\begin{proof}
By Proposition~\ref{prop:gamma-lambda-E}, it is enough to identify
\(\Lambda_E\) with \(\Gal(E/F)\). As observed above,
\[
E=\bigcup_{A\in\mathcal F_G(E)}L_A.
\]

Since each \(L_A/F\) is a finite Galois subextension of \(E/F\), every
\(F\)-auto\-morphism of \(E\) stabilizes \(L_A\). Hence restriction defines a homomorphism
\[
\Psi:\Gal(E/F)\longrightarrow\Lambda_E,
\qquad
\Psi(\sigma)=(\sigma|_{L_A})_A.
\]
It is injective, since an automorphism of \(E\) is determined by its
restrictions to the fields \(L_A\), which cover \(E\).

We prove that \(\Psi\) is surjective. Let
\((\sigma_A)_A\in\Lambda_E\). For \(x\in E\), choose
\(A\in\mathcal F_G(E)\) such that \(x\in L_A\), and define
\(\sigma(x):=\sigma_A(x)\).

This is independent of the choice of \(A\). Indeed, if also
\(x\in L_B\), then \(C:=A\cup B\) belongs to \(\mathcal F_G(E)\), and the
restrictions of \(\sigma_C\) to \(L_A\) and \(L_B\) are \(\sigma_A\) and
\(\sigma_B\), respectively. Hence
\(\sigma_A(x)=\sigma_C(x)=\sigma_B(x)\).

The map \(\sigma\) is an \(F\)-automorphism of \(E\). To see this, let
\(x,y\in E\). Then \(Gx,Gy\in\mathcal F_G(E)\), and for
\(A:=Gx\cup Gy\) one has \(x,y\in L_A\). Additivity, multiplicativity, and
preservation of \(1_E\) follow from the fact that \(\sigma_A\) is a field
automorphism of \(L_A\). Also \(\sigma\) fixes \(F\), since each \(\sigma_A\)
is an \(F\)-automorphism. The family \((\sigma_A^{-1})_A\) is again compatible.  Applying the same
construction to it gives the inverse map. Hence
\(\sigma\in\Gal(E/F)\), and by construction
\(\Psi(\sigma)=(\sigma_A)_A\). Thus \(\Psi\) is surjective.
Therefore \(\Psi\) is a group isomorphism.

The map \(\Psi\) is continuous for the Krull topology. 

\noindent 
Indeed, for each
\(A\in\mathcal F_G(E)\), the inverse image under \(\Psi\) of the kernel of the
projection \(\Lambda_E\to\Gal(L_A/F)\) is \(\Gal(E/L_A)\), which is open in
the Krull topology. Since \(\Psi\) is a continuous bijection from the compact
group \(\Gal(E/F)\) onto the Hausdorff group \(\Lambda_E\), it is a
homeomorphism.

Hence \(\Gal(E/F)\cong_{\mathrm{top}}\Lambda_E\), and the result follows from
Proposition~\ref{prop:gamma-lambda-E}.
\end{proof}

\section{Concluding remarks}

The preceding results show that finite-orbit actions provide a natural setting
for profinite reconstruction. The construction recovers the closure of the
original permutation representation, while the additional condition
\textup{FLEP} determines when this reconstruction carries an exact fixed-point
correspondence.

The profinite and Galois applications illustrate two complementary aspects of
the theory. In the profinite case, the normal finite-quotient action always
reconstructs the group, but it is exact only when every open subgroup is
normal, equivalently when every finite quotient is a Dedekind group. In the
field-theoretic case, finite orbits are equivalent to the Galois property of
\(E/E^G\), and the associated profinite group is the Krull Galois group.

Thus the finite-orbit viewpoint separates two phenomena: reconstruction of a
profinite group from finite permutation data, and exact recovery of subgroups
from fixed-point sets. The main role of \textup{FLEP} is to identify precisely
when these two phenomena occur together.


\begin{thebibliography}{99}


\bibitem{ConradInfiniteGalois}
Conrad, K.:
Infinite Galois Theory, draft lecture notes, CTNT 2020.
Available at
\url{https://ctnt-summer.math.uconn.edu/wp-content/uploads/sites/1632/2020/06/CTNT-InfGaloisTheory.pdf}.

\bibitem{Cruthirds}
Cruthirds, J.~E.:
\mbox{Infinite Galois theory for commutative rings.}
\\
\mbox{Pacific~J.~Math.~\textbf{64} (1976), no.~1, 107--117.}

\bibitem{DDMS99}
Dixon, J.~D., du Sautoy, M.~P.~F., Mann, A., Segal, D.:
Analytic Pro-\(p\) Groups.
2nd ed., Cambridge Studies in Advanced Mathematics, vol.~61.
Cambridge University Press, Cambridge, 1999.

\bibitem{LangAlgebra}
Lang, S.:
Algebra.
Graduate Texts in Mathematics, vol.~211, 3rd ed.
Springer, New York, 2002.

\bibitem{ReidWesolek}
Reid, C.~D., Wesolek, P.~R.:
Homomorphisms into totally disconnected, locally compact groups with dense image.
Forum Math. \textbf{31} (2019), no.~3, 685--701.

\bibitem{RibesZalesskii}
Ribes, L., Zalesskii, P.:
Profinite Groups.
2nd ed., Ergebnisse der Mathematik und ihrer Grenzgebiete, vol.~40.
Springer, Berlin, 2010.

\bibitem{RobinsonGroups}
Robinson, D.~J.~S.:
\emph{A Course in the Theory of Groups}.
2nd ed., Graduate Texts in Mathematics, vol.~80.
Springer, New York, 1996.

\bibitem{Wilson}
Wilson, J.~S.:
Profinite Groups.
London Mathematical Society Monographs, New Series, vol.~19.
Clarendon Press, Oxford, 1998.
\end{thebibliography}
\end{document}